\newtheorem{thm}{Theorem}[section]
\newtheorem{prp}[thm]{Proposition}
\newtheorem{lem}[thm]{Lemma}
\newtheorem{cor}[thm]{Corollary}
\theoremstyle{definition}
\newtheorem{dfn}[thm]{Definition}
\theoremstyle{remark}
\newtheorem{rmk}[thm]{Remark}
\newcommand{\go}{G^{(0)}}
\newcommand{\ho}{H^{(0)}}
\newcommand{\lsp}{\operatorname{span}}
\newcommand{\clsp}{\overline{\lsp}}
\newcommand{\Kk}{\mathcal{K}}
\newcommand{\Cc}{\mathcal{C}}
\newcommand{\Rr}{\mathcal{R}}
\newcommand{\Uu}{\mathcal{U}}
\newcommand{\NN}{\mathbb{N}}
\newcommand{\ZZ}{\mathbb{Z}}
\newcounter{saver}
\author[T.M. Carlsen]{Toke Meier Carlsen}
\address[T.M. Carlsen]{Department of Science and Technology\\University of the Faroe Islands
N\'oat\'un 3\\ FO-100 T\'orshavn\\the Faroe Islands}
\email{toke.carlsen@gmail.com}
\author[E. Ruiz]{Efren Ruiz}
\address[E. Ruiz]{Department of Mathematics\\University of Hawaii,
Hilo\\200 W. Kawili St.\\
Hilo, Hawaii\\
96720-4091 USA}
\email{ruize@hawaii.edu}
\author[A. Sims]{Aidan Sims}
\address[A. Sims]{School of Mathematics and Applied Statistics\\
University of Wollongong\\
NSW 2522\\
Australia}
\email{asims@uow.edu.au}
\title[Diagonal-preserving stable isomorphism]{Equivalence and stable isomorphism of groupoids,
and diagonal-preserving stable isomorphisms of graph $C^*$-algebras and Leavitt path algebras}
\subjclass[2010]{46L05 (primary); 16S99 (secondary)}
\keywords{Stabilisation; stable isomorphism; groupoid equivalence; Kakutani equivalence; graph algebra; Leavitt path algebra}
\date{\today}
\thanks{This work was initiated, and much of it completed, while all three authors were
attending the research program \emph{Classification of operator algebras: complexity,
rigidity, and dynamics} at the Mittag-Leffler Institute, January--April 2016.
This research was supported by Australian Research Council grant DP150101598 and by a grant from the Simons Foundation
(\#279369 to Efren Ruiz). We thank Dana Williams for helpful email correspondence.}
\begin{document}

\begin{abstract}
We prove that ample groupoids with $\sigma$-compact unit spaces are equivalent if and
only if they are stably isomorphic in an appropriate sense, and relate this to Matui's
notion of Kakutani equivalence. We use this result to show that diagonal-preserving
stable isomorphisms of graph $C^*$-algebras or Leavitt path algebras give rise to
isomorphisms of the groupoids of the associated stabilised graphs. We deduce that the
Leavitt path algebras $L_\ZZ(E_2)$ and $L_\ZZ(E_{2-})$ are not stably $^*$-isomorphic.
\end{abstract}

\maketitle

\section{Introduction}

A beautiful recent theorem of Matsumoto and Matui \cite{MatsumotoMatui} relates
diagonal-preserving isomorphism of Cuntz--Krieger algebras to the Bowen--Franks
invariants of the corresponding shifts of finite type, and to isomorphism of the
associated graph groupoids. As a result, diagonal-preserving isomorphism has become an
important notion in structure theory for graph $C^*$-algebras and Leavitt path algebras
\cite{JohansenSorensen, Carlsen}. A key ingredient in Matsumoto and Matui's approach is
the Weyl-groupoid construction, which reconstructs a groupoid from an associated algebra
and diagonal subalgebra. This construction goes back to the work of Feldman and Moore
\cite{FeldmanMoore} on von Neumann factors and was continued by Kumjian \cite{Kumjian}
and Renault \cite{Renault} for $C^*$-algebras. More recently, it has been refined by
Brownlowe--Carlsen--Whittaker \cite{BCW} for graph $C^*$-algebras, by Brown--Clark--an
Huef \cite{BCaH} for Leavitt path algebras, and by Ara--Bosa--Hazrat--Sims \cite{ABHS}
for Steinberg algebras.

The Weyl-groupoid approach is well-suited to questions about isomorphisms of graph
$C^*$-algebras or of Leavitt path algebras. But to use it to study stable isomorphism,
one first needs a groupoid-theoretic analogue of the Brown--Green--Rieffel
stable-isomorphism theorem for $C^*$-algebras. Here we supply such a theorem
(Theorem~\ref{thm:groupoidBGR}), and explore its consequences for graph $C^*$-algebras
and Leavitt path algebras (Section~\ref{sec:consequences}).

\smallskip

We begin in Section~\ref{sec:BGR} by proving our Brown--Green--Rieffel theorem for ample
groupoids with $\sigma$-compact unit spaces. We do not assume that our groupoids are
Hausdorff or second countable. Our proof parallels Brown's proof that a full corner of a
$\sigma$-unital $C^*$-algebra is stably isomorphic to the enveloping algebra. In
Section~\ref{sec:Kakutani} we digress to relate our results to Matui's definition
\cite{Matui} of Kakutani equivalence for ample groupoids with compact unit space. We
extend this notion to ample groupoids with noncompact unit space and prove that it
coincides with groupoid equivalence. We start Section~\ref{sec:consequences} by checking
that Tomforde's construction from a directed graph $E$ of a graph $SE$ satisfying
$C^*(SE) \cong C^*(E) \otimes \Kk$ is compatible with stabilising the groupoid. We then
explore the consequences of our Brown--Green--Rieffel theorem for groupoid $C^*$-algebras and
Steinberg algebras, and particularly for graph $C^*$-algebras and Leavitt path algebras:
Theorem~\ref{thm:main} says, amongst other things, that there is a diagonal-preserving
isomorphism $C^*(E) \otimes \Kk \cong C^*(F) \otimes \Kk$ if and only if there is a
diagonal-preserving isomorphism $C^*(SE) \cong C^*(SF)$, and likewise at the level of
Leavitt path algebras. We deduce using results of Carlsen \cite{Carlsen} that
$L_\ZZ(E_2)$ and $L_\ZZ(E_{2-})$ are not stably $^*$-isomorphic.

\section{Groupoid equivalence and stable isomorphism}\label{sec:BGR}

In this section we show that for ample groupoids, the Brown--Green--Rieffel
stable-isomorphism theorem \cite{BrownGreenRieffel:PJM77} works at the level of
groupoids.

An ample groupoid is a groupoid $G$ equipped with a topology with a basis of compact open
sets such that inversion and composition in $G$ are continuous, the unit space $\go$ is
Hausdorff, and the range and source maps $r,s : G \to \go$ are local homeomorphisms. The
unit space of an ample groupoid is automatically locally compact and totally
disconnected.

For groupoids $G$ and $H$, a \emph{$G$--$H$ equivalence} is a space $Z$ with commuting
free and proper actions of $G$ on the left and $H$ on the right such that $r : Z \to \go$
induces a homeomorphism $Z/H \cong \go$ and $s : Z \to \ho$ induces a homeomorphism of
$G\backslash Z \cong \ho$; if such a $Z$ exists, we say that $G$ and $H$ are \emph{groupoid
equivalent}. See \cite{MRW, Renault82} for more detail.

We will write $\Rr$ for the full countable equivalence relation $\Rr = \NN \times \NN$,
regarded as a discrete principal groupoid with unit space $\NN$. A space is
\emph{$\sigma$-compact} if it has a countable cover by compact sets; if it is locally
compact, totally disconnected and Hausdorff, it then has a countable cover by mutually
disjoint compact open sets. Given an ample groupoid $G$, the product $G \times \Rr$ is an
ample groupoid under the product topology and coordinatewise operations. We identify the
unit space of $G \times \Rr$ with $G^{(0)}\times\NN$.

\begin{thm}\label{thm:groupoidBGR}
Let $G$ and $H$ be ample groupoids. Suppose that $\go$ and $\ho$ are $\sigma$-compact.
Then $G$ and $H$ are groupoid equivalent if and only if $G \times \Rr \cong H \times
\Rr$.
\end{thm}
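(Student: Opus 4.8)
The plan is to prove the two implications separately, with the forward implication being routine and the converse requiring a groupoid version of Brown's stabilisation argument.

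For the ``if'' direction I would rely on standard properties of groupoid equivalence (see \cite{MRW}): that it is an equivalence relation, that isomorphic groupoids are equivalent, and that it is preserved by taking products with a fixed ample groupoid. The key observation is that $\Rr = \NN \times \NN$ is equivalent to the one-point groupoid, with equivalence space $\NN$ carrying the transitive $\Rr$-action on the left and the trivial action on the right. Multiplying this equivalence by the identity equivalence of $G$ with itself shows that $G$ is equivalent to $G \times \Rr$, and likewise $H$ is equivalent to $H \times \Rr$. So if $G \times \Rr \cong H \times \Rr$, then $G$ is equivalent to $G \times \Rr$, which is equivalent to $H \times \Rr$, which is equivalent to $H$, and transitivity gives the equivalence of $G$ and $H$.

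For the ``only if'' direction, suppose $Z$ is a $G$--$H$ equivalence. First I would assemble the linking groupoid $L = G \sqcup Z \sqcup Z^{\mathrm{op}} \sqcup H$ with unit space $\go \sqcup \ho$, checking that it can be topologised as an ample groupoid with $\sigma$-compact unit space (this uses that an equivalence of ample groupoids may be taken totally disconnected with range and source local homeomorphisms). Both $\go$ and $\ho$ are then clopen and \emph{full} in $L^{(0)}$, fullness meaning $r(s^{-1}(\go)) = L^{(0)}$, which holds precisely because $Z$ connects every $H$-orbit to $\go$ and conversely, and one has $L|_{\go} = G$ and $L|_{\ho} = H$. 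Thus the theorem reduces to the following full-corner lemma: if $U$ is a full clopen subset of the unit space of an ample groupoid $L$ with $\sigma$-compact unit space, then $L \times \Rr \cong (L|_U) \times \Rr$; applying it to $U = \go$ and $U = \ho$ yields $G \times \Rr \cong L \times \Rr \cong H \times \Rr$.

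To prove the full-corner lemma I would first record the elementary principle, analogous to Murray--von Neumann equivalence of projections, that if $A$ and $B$ are clopen subsets of $L^{(0)}$ and there is an open $W \subseteq L$ with $r|_W \colon W \to A$ and $s|_W \colon W \to B$ homeomorphisms, then conjugation by $W$ gives an isomorphism $L|_A \cong L|_B$. Since $(L \times \Rr)^{(0)} = L^{(0)} \times \NN$ and $(L \times \Rr)|_{U \times \NN} = (L|_U) \times \Rr$, it suffices to produce such a $W \subseteq L \times \Rr$ realising $L^{(0)} \times \NN$ and $U \times \NN$ as equivalent clopen unit subspaces. Here the argument parallels Brown's: using $\sigma$-compactness and total disconnectedness I would partition $L^{(0)}$ into compact open sets $V_n$, and use fullness of $U$ together with the compact-open-bisection basis to find, for each $n$, a compact open bisection of $L$ carrying $V_n$ onto a clopen subset of $U$. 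The role of the $\NN$-coordinate supplied by $\Rr$ is to provide infinitely many disjoint copies into which these images can be placed without collision, exactly as stabilisation does in Brown's theorem. I would then run a back-and-forth construction, alternately ensuring that successive compact open pieces of $L^{(0)} \times \NN$ appear in $r(W)$ and that successive pieces of $U \times \NN$ appear in $s(W)$, placing each newly adjoined compact open bisection in previously unused copies, so that in the limit $W$ is open with $r|_W$ and $s|_W$ homeomorphisms onto all of $L^{(0)} \times \NN$ and $U \times \NN$ respectively.

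The main obstacle I anticipate is precisely this back-and-forth construction and the verification that it behaves well in the ample, possibly non-Hausdorff and non-second-countable, setting: one must keep all building blocks compact open bisections, ensure at each stage that the new piece avoids the sources and ranges already used (which is where the infinitely many $\Rr$-copies and the fullness of $U$ are essential), and confirm that the nested union $W$ is open with $r|_W$ and $s|_W$ restricting to homeomorphisms onto the two full clopen unit subspaces. The bookkeeping needed to exhaust both $L^{(0)} \times \NN$ and $U \times \NN$ simultaneously, rather than merely embedding one into the other, is the crux, just as the Eilenberg-swindle-flavoured part is the heart of Brown's original argument.
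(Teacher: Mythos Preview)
Your proposal is correct and follows essentially the same route as the paper: the linking-groupoid reduction to a full-corner lemma, proved by constructing an open bisection of $L \times \Rr$ between $L^{(0)} \times \NN$ and $U \times \NN$ via a Brown-style back-and-forth, with the converse handled by the observation that $G$ is equivalent to $G \times \Rr$. The paper organises the back-and-forth slightly differently---first proving an ``isometry'' lemma (a bisection with range $\go \times \{1\}$ and source clopen in $K \times \NN$), then upgrading it to a ``unitary'' via a partition $\NN = \bigsqcup_i N_i$ into infinite blocks with odd steps covering range and even steps covering source---but this is exactly the bookkeeping you anticipate as the main obstacle.
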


The strategy is to prove that for any clopen $K \subseteq \go$ that meets every
$G$-orbit, $G \times \Rr \cong G|_K \times \Rr$, paralleling Brown's result about full
corners of $\sigma$-unital $C^*$-algebras. Our proof follows Brown's very closely:
Lemmas~\ref{lem:lots-o-bisections}, \ref{lem:isometry}~and~\ref{lem:unitary} and their
proofs are direct analogues of \cite[Lemmas~2.3, 2.4~and~2.5]{Brown:PJM77}.

We say that $U \subseteq G$ is an \emph{open bisection} if $U$ is open and $r, s$
restrict to homeomorphisms of $U$ onto $r(U), s(U)$ respectively. For $x \in \go$, we
denote $r^{-1}(x)$ by $G^x$ and $s^{-1} (x)$ by $G_x$, and for $K \subseteq \go$, we
write $G K := s^{-1}(K)$, $K G := r^{-1}(K)$, and $G|_K := KG \cap GK$. A set $K
\subseteq \go$ is \emph{$G$-full} if $r(GK) = \go$.

\begin{lem}\label{lem:lots-o-bisections}
Let $G$ be an ample groupoid such that $\go$ is $\sigma$-compact. Suppose that $K
\subseteq \go$ is clopen and $G$-full. Then there is a sequence of compact open
bisections $V_i \subseteq G K$ with mutually disjoint ranges such that $\bigsqcup_i
r(V_i) = \go$.
\end{lem}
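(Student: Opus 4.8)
The plan is to produce a countable family of compact open bisections contained in $GK$ whose ranges cover $\go$, and then to disjointify those ranges. Throughout I would use two elementary facts. First, in a locally compact, totally disconnected, Hausdorff space the compact open sets form a Boolean algebra: they are closed under finite unions, finite intersections, and relative complements. Second, if $U$ is a compact open bisection and $W \subseteq r(U)$ is compact open, then $U \cap r^{-1}(W) = (r|_U)^{-1}(W)$ is again a compact open bisection, with range exactly $W$.

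First I would use fullness and ampleness to find a suitable bisection through each point. Since $K$ is $G$-full we have $r(GK) = \go$, so for each $x \in \go$ there is $g \in GK$ with $r(g) = x$ and $s(g) \in K$. As $G$ is ample, $g$ lies in some compact open bisection, and since $K$ is clopen, so is $s^{-1}(K) = GK$; intersecting with $GK$ yields a compact open bisection $U_x \ni g$ with $U_x \subseteq GK$. Then $r(U_x)$ is a compact open neighbourhood of $x$, so the collection $\{r(U_x) : x \in \go\}$ covers $\go$.

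Next I would invoke $\sigma$-compactness to pass to a countable subfamily. Writing $\go$ as a countable union of compact sets and covering each by finitely many of the sets $r(U_x)$, I obtain a sequence $(U_j)_{j \in \NN}$ of compact open bisections with $U_j \subseteq GK$ and $\bigcup_j r(U_j) = \go$. Finally I would disjointify: set $W_j := r(U_j) \setminus \bigcup_{k < j} r(U_k)$, which is compact open by the Boolean-algebra fact, so the $W_j$ are mutually disjoint with $\bigsqcup_j W_j = \go$. Putting $V_j := U_j \cap r^{-1}(W_j)$, each $V_j$ is a compact open bisection with $r(V_j) = W_j$ and $s(V_j) \subseteq s(U_j) \subseteq K$, whence $V_j \subseteq GK$; discarding the indices with $W_j = \emptyset$ gives the required sequence.

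The main obstacle is really just the bookkeeping needed to keep every set compact open and every $V_j$ a genuine bisection after restriction and subtraction; the crux is the observation that the compact open subsets of $\go$ form a Boolean algebra and that restricting a bisection to a compact open subset of its range preserves the bisection property. Once these are in place, the argument is a standard \emph{cover, countably refine, disjointify} procedure over a $\sigma$-compact totally disconnected base.
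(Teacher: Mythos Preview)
Your proof is correct and follows essentially the same approach as the paper's: use $G$-fullness and ampleness to find compact open bisections in $GK$ whose ranges cover $\go$, extract a countable subfamily via $\sigma$-compactness, and then disjointify the ranges by restricting each bisection to the complement of the previous ranges. The only cosmetic difference is that the paper first fixes a countable compact open cover of $\go$ and works inside each piece, whereas you cover $\go$ by all the $r(U_x)$ first and then pass to a countable subcover; the substance is identical.
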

\begin{proof}
Choose a countable cover $\Uu$ of $\go$ by compact open sets. Fix $U \in \Uu$. For $u \in
U$, since $K$ is $G$-full, there exists $\gamma_u \in G^u \cap GK$. Since $K$ is open and
$G$ is ample, for each $u \in U$, there is a compact open bisection $V_u$ such that
$\gamma_u \in V_u \subseteq G K$. Each $r(V_u)$ is clopen in $\go$ because $\go$ is
Hausdorff. Since $U$ is compact, we can find $V_{u_1}, \dots V_{u_{j(U)}}$ with $U
\subseteq \bigcup^{j(U)}_{i=1} r(V_{u_i})$. By choosing a finite collection like this for
each $U \in \Uu$ and enumerating the union of these collections, we obtain a list
$(V^0_i)^\infty_{i=1}$ of compact open bisections with $\bigcup r(V^0_i) = \go$ and
$\bigcup s(V^0_i) \subseteq K$. For each $i$, the set $X_i := r(V^0_i) \setminus
\bigcup_{j < i} r(V^0_j)$ is compact open in $\go$. Since $r^{-1} : r(V^0_i) \to V^0_i$
is a homeomorphism we deduce that $V_i := V^0_i \cap X_i G$ is a compact open subset of
$V^0_i$. These $V_i$ suffice.
\end{proof}

\begin{lem}\label{lem:isometry}
Let $G$ be an ample groupoid such that $\go$ is $\sigma$-compact. Suppose that $K
\subseteq \go$ is clopen and $G$-full. Then there is an open bisection $W \subseteq G
\times \Rr$ such that $r(W) = \go \times \{1\}$ and $s(W) \subseteq K \times \NN$ is
clopen in $\go \times \NN$.
\end{lem}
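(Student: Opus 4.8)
The plan is to feed the bisections produced by Lemma~\ref{lem:lots-o-bisections} into the second factor of $G \times \Rr$, using distinct levels of $\Rr$ to separate their sources while stacking all of their ranges at the single level $1$. Concretely, I would apply Lemma~\ref{lem:lots-o-bisections} to obtain a sequence $(V_i)_{i \ge 1}$ of compact open bisections with $V_i \subseteq GK = s^{-1}(K)$, with pairwise disjoint ranges, and with $\bigsqcup_i r(V_i) = \go$. Recalling that $\Rr = \NN \times \NN$ is the pair groupoid on $\NN$, so that $(m,n) \in \Rr$ has range $m$ and source $n$, I would then set
$$W := \bigcup_{i \ge 1} V_i \times \{(1,i)\} \subseteq G \times \Rr.$$
An element of $W$ has the form $(\gamma,(1,i))$ with $\gamma \in V_i$; under the identification of the unit space with $\go \times \NN$ its range is $(r(\gamma),1)$ and its source is $(s(\gamma),i)$.

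I would then verify the three required properties. Since each $V_i$ is compact open and $\Rr$ carries the discrete topology, each $V_i \times \{(1,i)\}$ is a compact open subset of $G \times \Rr$, so $W$ is open as a union of open sets. For the range, $r(W) = \bigcup_i r(V_i) \times \{1\}$, and because the $r(V_i)$ are pairwise disjoint and cover $\go$ this equals $\go \times \{1\}$. For the source, $s(W) = \bigsqcup_i s(V_i) \times \{i\}$, and since $V_i \subseteq s^{-1}(K)$ we have $s(V_i) \subseteq K$, whence $s(W) \subseteq K \times \NN$.

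To see that $W$ is a bisection I would check that $r$ and $s$ restrict to injections on $W$: on each piece $V_i \times \{(1,i)\}$ both maps are injective; the ranges $r(V_i) \times \{1\}$ are pairwise disjoint by the choice of the $V_i$; and the sources $s(V_i) \times \{i\}$ are trivially pairwise disjoint because they lie at distinct levels of $\NN$. An injective restriction of the local homeomorphisms $r, s$ to the open set $W$ is automatically a homeomorphism onto its (open) image, so $W$ is an open bisection. Finally, for the clopenness of $s(W)$ I would argue level by level: $s(W) \cap (\go \times \{i\}) = s(V_i) \times \{i\}$ is clopen in the clopen set $\go \times \{i\}$ because $s(V_i)$ is compact open in $\go$; hence $s(W)$ is open, while its complement $\bigsqcup_i (\go \setminus s(V_i)) \times \{i\}$ is open for the same reason, so $s(W)$ is clopen in $\go \times \NN$.

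Once Lemma~\ref{lem:lots-o-bisections} is in hand the construction is essentially forced, so I do not expect a serious obstacle. The only points needing genuine care are the bookkeeping that the \emph{infinite} union $W$ is still a bisection---which rests entirely on the disjointness of the ranges supplied by the lemma, with the levels of $\Rr$ handling the sources for free---and the verification that $s(W)$ is closed as well as open, for which the disjoint-level decomposition of $\go \times \NN$ does all the work.
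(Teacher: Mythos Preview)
Your construction is exactly the one in the paper: take the $V_i$ from Lemma~\ref{lem:lots-o-bisections}, set $W=\bigcup_i V_i\times\{(1,i)\}$, and verify openness, the bisection property, and clopenness of $s(W)$ via the level-by-level decomposition. The paper's proof is terser but follows the same steps and the same reasoning throughout.
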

\begin{proof}
Fix compact open bisections $(V_i)^\infty_{i=1}$ as in Lemma~\ref{lem:lots-o-bisections}.
Put $W := \bigcup_i V_i \times \{(1,i)\}$, which is open because the $V_i$ are. The
$r(V_i \times \{(1,i)\})$ are mutually disjoint because the $r(V_i)$ are; the $s(V_i
\times \{(1,i)\})$ are clearly mutually disjoint. The maps $s,r$ are homeomorphisms on
$W$ because they restrict to homeomorphisms on the relatively clopen subsets $V_i \times
\{(1,i)\}$. Clearly $s(W) = \bigcup_i s(V_i) \times \{i\}$ is open. It is also closed
because the $s(V_i)$ are closed in $\go$, so $(\go \times \NN) \setminus s(W) = \bigcup_i \big((\go
\setminus s(V_i)) \times \{i\})$ is open.
\end{proof}

\begin{lem}\label{lem:unitary}
Under the hypotheses of Lemma~\ref{lem:isometry}, there is an open bisection $Y \subseteq
G \times \Rr$ such that $r(Y) = \go \times \NN$ and $s(Y) = K \times \NN$.
\end{lem}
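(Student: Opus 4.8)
The plan is to treat open bisections of $G\times\Rr$ as partial isometries in its ``multiplier groupoid'', with composition playing the role of multiplication, inversion the role of adjoint, and $r(U),s(U)$ the roles of range and source projections. From this viewpoint Lemma~\ref{lem:isometry} furnishes an \emph{isometry} $W$ exhibiting the unit subset $\go\times\{1\}$ as equivalent to the clopen subset $e:=s(W)\subseteq K\times\NN$, and the task is to upgrade this to a \emph{unitary} $Y$ implementing an equivalence of the two full projections $\go\times\NN$ and $K\times\NN$. I would obtain $Y$ by an Eilenberg/Schr\"oder--Bernstein swindle, mirroring the passage from an isometry to a unitary in \cite[Lemma~2.5]{Brown:PJM77}, exploiting the proper infiniteness supplied by the $\Rr$-coordinate.

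First I would spread $W$ across all levels. Writing $P_n:=\go\times\{n\}$, the element $\go\times\{(1,n)\}$ is a bisection identifying $P_n$ with $\go\times\{1\}$, so $W$ yields a bisection from $P_n$ onto a copy of $e$. The key refinement is to place these copies \emph{disjointly and high up} in $K\times\NN$: choosing pairwise disjoint infinite sets $L_n\subseteq\{n+1,n+2,\dots\}$ together with level-relabelling bisections $\Phi_n$ (supported on $K$-units, with $s(\Phi_n)=K\times\NN$ and $r(\Phi_n)=K\times L_n$), I transport $e$ to a clopen $e_n\subseteq K\times L_n$ with $e_n\sim e$. Composing these gives open bisections $Z_n$ with $r(Z_n)=P_n$ and $s(Z_n)=e_n$, and since the $L_n$ are disjoint so are the $e_n$; thus $Y_0:=\bigsqcup_n Z_n$ is an open bisection with $r(Y_0)=\go\times\NN$ and clopen source $E:=\bigsqcup_n e_n\subseteq K\times\NN$. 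By design $Y_0$ is \emph{level-increasing}: it carries $P_n$ into levels exceeding $n$.

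Next I would run the back-and-forth. Since $K\subseteq\go$, the inclusion $K\times\NN\subseteq\go\times\NN$ is an open bisection of units, so alongside $g:=Y_0\colon\go\times\NN\to K\times\NN$ (onto $E$) I have the inclusion $h$ (onto $K\times\NN$). Let $F:=(\go\setminus K)\times\NN$, a clopen set, and put $A:=\bigsqcup_{k\ge0}g^k(F)$. The Schr\"oder--Bernstein recipe then yields a bijection $\go\times\NN\to K\times\NN$ agreeing with $g$ on $A$ and with $h^{-1}$ on the complement (which lies in $K\times\NN$). I would define $Y$ accordingly as the union of the part of $Y_0$ with range in $A$ and the units $(\go\times\NN)\setminus A$. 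These two pieces have disjoint ranges and disjoint sources, so $Y$ is a bisection with $r(Y)=\go\times\NN$ and $s(Y)=K\times\NN$.

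The main obstacle is to ensure that $Y$ is genuinely \emph{open} and that its range and source are \emph{exactly} $\go\times\NN$ and $K\times\NN$; both reduce to showing that the ping-pong set $A$ is clopen. Here the level-increasing design of $Y_0$ is decisive: an easy induction gives $g^k(F)\subseteq\go\times\{k+1,k+2,\dots\}$, so a point at level $N$ has a neighbourhood inside the clopen set $\go\times\{N\}$, which meets only the finitely many clopen sets $g^0(F),\dots,g^{N-1}(F)$; hence $A$ is closed as well as open. With $A$ clopen, the two defining pieces of $Y$ are open bisections over a clopen partition of $\go\times\NN$, so $Y$ is an open bisection with the required range and source, completing the analogue of \cite[Lemma~2.5]{Brown:PJM77}. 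I expect this properness/closedness to be the only delicate point; that the $Z_n$ and $\Phi_n$ are bisections and that the recipe produces a bijection are routine.
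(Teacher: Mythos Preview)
Your argument is correct, but it takes a different route from the paper's. The paper mirrors Brown's back-and-forth from \cite[Lemma~2.5]{Brown:PJM77} rather literally: it partitions $\NN=\bigsqcup_i N_i$ into infinite blocks and builds $Y=\bigcup_j Y_j$ inductively, at odd steps applying Lemma~\ref{lem:isometry} inside $G\times(\Rr|_{N_{n+1}})$ to cover the next block $\go\times N_{n+1}$ on the range side, and at even steps using a bijection $N_{n+1}\to N_{n+2}$ to cover $K\times N_{n+1}$ on the source side; clopenness of the partial ranges and sources is carried along the induction. Your approach instead first amplifies $W$ to a single ``isometry'' $Y_0$ with full range $\go\times\NN$ and clopen source inside $K\times\NN$, deliberately arranged to be level-increasing, and then performs a Cantor--Schr\"oder--Bernstein correction against the inclusion $K\times\NN\hookrightarrow\go\times\NN$. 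The level-increasing design is the crucial extra idea in your version: it guarantees that the orbit set $A=\bigsqcup_{k\ge0}g^k(F)$ meets each slice $\go\times\{N\}$ in only finitely many of the clopen pieces $g^k(F)$, so $A$ is clopen and the patched bisection $Y$ is open with the required range and source. You trade the alternating bookkeeping of the paper's induction for one global set-theoretic step, at the cost of arranging and exploiting the level-monotonicity; the paper stays closer to Brown's $C^*$-argument and never needs to analyse iterated images. Both routes yield the lemma with comparable effort.
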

\begin{proof}
Write $\NN = \bigsqcup_{i=1}^\infty N_i$ as a union of mutually disjoint infinite
subsets. We claim that there exists a sequence $Y_j$ of open bisections with mutually
disjoint clopen ranges and mutually disjoint clopen sources such that for each $n \ge 0$,
we have
\begin{align*}
\textstyle\bigcup^{2n-1}_{j=1} r(Y_j) &=\textstyle \bigcup^n_{i=1} \go \times N_i,
    &&&\textstyle \bigcup^{2n}_{j=1} r(Y_j) &\textstyle\subseteq \bigcup^{n+1}_{i=1} \go \times N_i,\\[0.25em]
\textstyle\bigcup^{2n-1}_{j=1} s(Y_j) &\textstyle\subseteq \bigcup^n_{i=1} K \times N_i,&\text{and}&
    &\textstyle \bigcup^{2n}_{j=1} s(Y_j) &\textstyle= \bigcup^{n}_{i=1} K \times N_i.
\end{align*}

Suppose that $Y_1, \dots, Y_{2n}$ satisfy these equations (this is trivial when $n = 0$).
To construct $Y_{2n+1}$, apply Lemma~\ref{lem:isometry} to $G \times \left(
\mathcal{R}|_{N_{n+1}} \right) $ and $K \times N_{n+1} \subseteq \go \times N_{n+1}$ to
obtain an open bisection $W \subseteq G \times \left( \mathcal{R}|_{N_{n+1}} \right)
\times \mathcal{R}$ such that $r(W) = \go \times N_{n+1} \times \{1\}$ and $s(W)
\subseteq K \times N_{n+1} \times \NN$ is clopen. Fix a bijection $\theta : N_{n+1}
\times \NN \to N_{n+1}$, and define
\[
W' := \{(g, (p, \theta(q,m))) : (g, (p,q), (1,m)) \in W\}
    \subseteq G \times \left( \mathcal{R}|_{N_{n+1}} \right).
\]
This $W'$ is an open bisection with $r(W') = \go \times N_{n+1}$ and $s(W') \subseteq K
\times N_{n+1}$. Let
\[
Y_{2n+1} := \big((\go \times N_{n+1}) \setminus r(Y_{2n})\big) W'.
\]
Since $r(Y_{2n})$ is clopen as part of the induction hypothesis, so is $(\go \times
N_{n+1}) \setminus r(Y_{2n})$; so $Y_{2n+1}$ is open. Since $r$ and $s$ restrict to
homeomorphisms on $W'$, the set $s(Y_{2n+1})$ is clopen in $G^{(0)}\times\NN$. We
have $\bigcup^{2n+1}_{j=1} r(Y_j) = \bigcup^{n+1}_{i=1} \go \times N_i$ by definition of
$Y_{2n+1}$, and clearly $\bigcup^{2n+1}_{j=1} s(Y_j) \subseteq \bigcup^{n+1}_{i=1} K
\times N_i$.

To construct $Y_{2n+2}$, choose a bijection $\phi : N_{n+1} \to N_{n+2}$, and define
\begin{align*}
Y_{2n+2} &:=\textstyle
    \Big(\bigcup_{i \in N_{n+1}} \go \times \{(\phi(i),i)\}\Big)
        \big((K \times N_{n+1}) \setminus s(Y_{2n+1})\big)\\
    &= \big\{(u, \phi(n)) : (u,n) \in (K \times N_{n+1}) \setminus s(Y_{2n+1})\big\}
    \subseteq \go \times \Rr.
\end{align*}
This is open because $s(Y_{2n+1})$ is closed. It is a bisection because $\phi$ is a
bijection. Both $s(Y_{2n+2})$, $r(Y_{2n+2})$ are clopen in $G^{(0)}\times\NN$ because $s$
and $r$ are homeomorphisms on $\big(\bigcup_{i \in N_{n+1}} \go \times
\{(\phi(i),i)\}\big)$. We have $\bigcup^{2n+2}_{j=1} s(Y_j) = \bigcup^{n+1}_{i=1} K
\times N_i$ and $\bigcup^{2n+2}_{j=1} r(Y_j) \subseteq \go \times \bigcup^{n+2}_{i=1}
N_i$ by construction. This proves the claim.

Let $Y := \bigcup^\infty_{i=1} Y_i$, which is open because the $Y_i$ are open. Since the
$s(Y_i)$ are mutually disjoint, $s$ is injective on $Y$; and similarly for $r$. Since the
$Y_i$ are open and $s,r$ restrict to homeomorphisms on the $Y_i$, we see that $s,r$ are
homeomorphisms on $W$. We have $r(Y) = \bigcup_n \bigcup^{2n-1}_{j=1} r(Y_j) = \go \times
\NN$ and $s(Y) = \bigcup_n \bigcup^{2n}_{j=1} s(Y_j) = K \times \NN$.
\end{proof}

We now obtain a groupoid version of \cite[Corollary~2.6]{Brown:PJM77}.

\begin{prp}\label{prp:corner->stable iso}
Let $G$ be an ample groupoid such that $\go$ is $\sigma$-compact. Suppose that
$K\subseteq \go$ is clopen and $G$-full. Then $G \times \Rr \cong G|_K \times \Rr$.
\end{prp}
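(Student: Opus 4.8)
The plan is to use the \emph{unitary} bisection $Y$ produced by Lemma~\ref{lem:unitary} to implement the isomorphism by conjugation, exactly as a full partial isometry implements $pAp \otimes \Kk \cong A \otimes \Kk$ in \cite[Corollary~2.6]{Brown:PJM77}. The first observation is that the reduction $(G \times \Rr)|_{K \times \NN}$ is literally equal to $G|_K \times \Rr$: an element $(g,(m,n))$ of $G \times \Rr$ has range $(r(g),m)$ and source $(s(g),n)$, so it lies in this reduction precisely when $r(g),s(g) \in K$, that is, when $g \in G|_K$. Hence it suffices to build an isomorphism $G \times \Rr \cong (G \times \Rr)|_{K \times \NN}$.

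Fix $Y$ as in Lemma~\ref{lem:unitary}, so that $Y$ is an open bisection with $r(Y) = \go \times \NN$ and $s(Y) = K \times \NN$. Because $Y$ is a bisection, $r|_Y$ and $s|_Y$ are homeomorphisms onto their (clopen) images; for $\eta \in \go \times \NN = r(Y)$ I write $y_\eta := (r|_Y)^{-1}(\eta)$ for the unique element of $Y$ with $r(y_\eta) = \eta$, noting that $\eta \mapsto y_\eta$ is continuous. I would then define
\[
\Phi \colon G \times \Rr \to (G \times \Rr)|_{K \times \NN}, \qquad \Phi(\gamma) := y_{r(\gamma)}^{-1}\,\gamma\,y_{s(\gamma)}.
\]
The three factors are composable since $r(y_{r(\gamma)}) = r(\gamma)$ and $r(y_{s(\gamma)}) = s(\gamma)$, and the resulting element has range $s(y_{r(\gamma)})$ and source $s(y_{s(\gamma)})$, both in $s(Y) = K \times \NN$; so $\Phi$ really does take values in the reduction.

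The remaining verifications are routine groupoid bookkeeping. For multiplicativity, if $s(\gamma_1) = r(\gamma_2)$ then $y_{s(\gamma_1)} = y_{r(\gamma_2)}$, so the inner product $y_{s(\gamma_1)}\,y_{r(\gamma_2)}^{-1}$ collapses to the unit $s(\gamma_1)$ and $\Phi(\gamma_1)\Phi(\gamma_2) = \Phi(\gamma_1\gamma_2)$. For invertibility, writing $\tilde y_\zeta := (s|_Y)^{-1}(\zeta)$ for $\zeta \in K \times \NN = s(Y)$, the map $\delta \mapsto \tilde y_{r(\delta)}\,\delta\,\tilde y_{s(\delta)}^{-1}$ is a two-sided inverse of $\Phi$ (this also confirms surjectivity onto the reduction). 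Finally $\Phi$ is continuous because it is assembled from the continuous maps $\gamma \mapsto r(\gamma)$, $\gamma \mapsto s(\gamma)$, $\eta \mapsto y_\eta$, inversion, and multiplication in $G \times \Rr$; its inverse is continuous for the same reason, using that $s|_Y$ is a homeomorphism. Thus $\Phi$ is an isomorphism of topological groupoids.

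The genuinely hard work is already done in Lemma~\ref{lem:unitary}: producing a single global open bisection $Y$ with $r(Y) = \go \times \NN$ and $s(Y) = K \times \NN$ is the analogue of Brown's unitary, and is where $G$-fullness and $\sigma$-compactness of $\go$ are really used. Within the present proposition the only points needing care are checking that conjugation by $Y$ is everywhere defined and lands in (and surjects onto) the correct reduction --- which is exactly the composability and source/range computations above --- together with the observation that $Y$ being an \emph{open} bisection is what makes both $\Phi$ and $\Phi^{-1}$ continuous.
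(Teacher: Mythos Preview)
Your proof is correct and is essentially the same as the paper's: both apply Lemma~\ref{lem:unitary} to obtain the open bisection $Y$, and both define the isomorphism as conjugation $\gamma \mapsto Y^{-1}\gamma Y$, with inverse $\eta \mapsto Y\eta Y^{-1}$. You have simply spelled out in more detail the composability, range/source, multiplicativity, and continuity checks that the paper leaves implicit, and you have also made explicit the identification $(G \times \Rr)|_{K \times \NN} = G|_K \times \Rr$.
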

\begin{proof}
Apply Lemma~\ref{lem:unitary} to obtain an open bisection $Y \subseteq G \times \Rr$ such
that $r(Y) = \go \times \NN$ and $s(Y) = K \times \NN$. For $\gamma \in G \times \Rr$, we write
$Y^{-1}\gamma Y$ for the element $\alpha^{-1}\gamma\beta$ obtained from the unique
elements $\alpha,\beta \in Y$ with $r(\alpha) = r(\gamma)$ and $s(\beta) = s(\gamma)$.
Since $Y$ is a bisection, the map $\gamma \mapsto Y^{-1}\gamma Y$ is a groupoid
homomorphism with range in $G|_K \times \Rr$. It is continuous because multiplication in
$G \times \Rr$ is continuous. Since $\eta \mapsto Y \eta Y^{-1} : G|_K \times \Rr \to G \times \Rr$
is a continuous inverse, $\gamma \mapsto Y^{-1}\gamma Y$ is the desired isomorphism $G
\times \Rr \to G|_K \times \Rr$.
\end{proof}
\begin{proof}[Proof of Theorem~\ref{thm:groupoidBGR}]
Let $Z$ be a $G$--$H$-equivalence. Consider the linking groupoid $L = G \sqcup Z \sqcup
Z^{\operatorname{op}} \sqcup H$ \cite[Lemma~3]{SimsWilliams}. By
\cite[Lemma~4.2]{ClarkSims:JPAA15}, $\go, \ho \subseteq L^{(0)}$ both satisfy the
hypotheses of Proposition~\ref{prp:corner->stable iso}. So $G \times \Rr \cong L|_{\go}
\times \Rr \cong L \times \Rr \cong L|_{\ho} \times \Rr \cong H \times \Rr$.

Now suppose that $G \times \Rr \cong H \times \Rr$. The space $X := G \times \{(1,i) : i
\in \NN\}$ is a $G$--$(G \times \Rr)$-equivalence, and similarly $Z := H \times \{(i,1) :
i \in \NN\}$ is a $(H \times \Rr)$--$H$-equivalence. Since $G \times \Rr \cong H \times
\Rr$ and groupoid equivalence is an equivalence relation, we deduce that $G$ and $H$ are
groupoid equivalent.
\end{proof}

\section{Kakutani equivalence}\label{sec:Kakutani}

Matui \cite{Matui} defines Kakutani equivalence for ample groupoids $G$ and $H$ with
compact unit spaces: $G$ and $H$ are Kakutani equivalent if there are full clopen subsets
$X \subseteq \go$ and $Y \subseteq \ho$ such that $G|_X \cong H|_Y$. We extend this
notion to ample groupoids with non-compact unit spaces.

\begin{dfn}
Let $G$ and $H$ be ample groupoids. Then $G$ and $H$ are \emph{Kakutani equivalent} if
there are a $G$-full clopen $X \subseteq \go$ and an $H$-full clopen $Y \subseteq \ho$
such that $G|_X \cong H|_Y$.
\end{dfn}

\begin{thm}\label{thm:equiv equiv}
Let $G$ and $H$ be ample groupoids with $\sigma$-compact unit spaces. The following are
equivalent:
\begin{enumerate}
\item\label{it:Ke} $G$ and $H$ are Kakutani equivalent;
\item\label{it:wKe} there exist full open sets $X \subseteq \go$ and $Y \subseteq
    \ho$ such that $G|_X \cong H|_Y$;
\item\label{it:ge} $G$ and $H$ are groupoid equivalent;
\item $G \times \Rr \cong H \times \Rr$.
\end{enumerate}
\end{thm}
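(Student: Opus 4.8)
The plan is to combine Theorem~\ref{thm:groupoidBGR}, which already gives the equivalence of~(3) and~(4), with three further implications that close the cycle $(1)\Rightarrow(2)\Rightarrow(4)\Rightarrow(1)$. The implication $(1)\Rightarrow(2)$ is immediate, since a clopen set is open and $G$-fullness means the same thing in both conditions. For $(2)\Rightarrow(4)$ I would first observe that the word ``clopen'' in Lemmas~\ref{lem:lots-o-bisections}--\ref{lem:unitary} and Proposition~\ref{prp:corner->stable iso} is never used beyond openness: each proof only invokes that $K$ is open (so that $GK = s^{-1}(K)$ is open) and $G$-full, together with Hausdorffness of $\go$ to conclude that images of compact bisections are clopen. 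Hence Proposition~\ref{prp:corner->stable iso} holds verbatim for open $G$-full $K$, giving $G\times\Rr\cong G|_X\times\Rr$ and $H\times\Rr\cong H|_Y\times\Rr$; tensoring the isomorphism $G|_X\cong H|_Y$ with the identity on $\Rr$ then yields $G\times\Rr\cong H\times\Rr$, which is~(4).

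The substance of the theorem is the remaining implication $(4)\Rightarrow(1)$. Writing $M:=G\times\Rr$ and fixing an isomorphism $\Psi\colon M\to H\times\Rr$, the sets $U:=\go\times\{1\}$ and $W:=\Psi^{-1}(\ho\times\{1\})$ are clopen and $M$-full, and the reductions satisfy $M|_U\cong G$ and $M|_W\cong H$. Thus it suffices to produce a clopen bisection $B\subseteq M$ with $r(B)\subseteq U$ and $s(B)\subseteq W$ such that $r(B)$ and $s(B)$ are \emph{clopen} and $M$-full: conjugation by $B$ (as in Proposition~\ref{prp:corner->stable iso}) gives $M|_{r(B)}\cong M|_{s(B)}$, and transporting this through $M|_U\cong G$ and $M|_W\cong H$ turns it into an isomorphism $G|_X\cong H|_Y$, where $X\subseteq\go$ and $Y\subseteq\ho$ are the clopen $G$-full and $H$-full sets corresponding to $r(B)$ and $\Psi(s(B))$. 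This is exactly~(1).

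The main obstacle is the construction of this bisection $B$ inside the clopen set $A:=r^{-1}(U)\cap s^{-1}(W)$ of arrows from $W$ to $U$; mutual fullness of $U$ and $W$ gives $r(A)=U$ and $s(A)=W$. Controlling the \emph{range} is comparatively easy: a cover of $U$ by compact open bisections contained in $A$ can be disjointified as in Lemma~\ref{lem:lots-o-bisections} into a partition $U=\bigsqcup_i r(V_i)$ into compact open cells, and since the cells $r(V_i)$ are themselves clopen, any level-wise clopen choice of subsets has clopen union, so $r(B)$ is automatically clopen. The difficulty is to make $s(B)$ clopen and full \emph{simultaneously} with $r(B)$: a naive source-trimming destroys the analogous partition structure on the $W$-side, and in fact for a general ample groupoid the related statement that an open full set contains a clopen full set can fail in the presence of finite orbits. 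The step where I expect the real effort to lie is therefore a back-and-forth argument refining Lemma~\ref{lem:unitary} that keeps both range and source clopen and full at once; crucially this should exploit that $M\cong G\times\Rr$ has only infinite orbits, so the $\Rr$-factor supplies the room---exactly as in the proof of Lemma~\ref{lem:unitary}---to spread sources apart and avoid the collisions that would otherwise obstruct closedness.
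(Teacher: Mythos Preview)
Your cycle $(1)\Rightarrow(2)\Rightarrow(4)\Rightarrow(1)$ together with Theorem~\ref{thm:groupoidBGR} for $(3)\Leftrightarrow(4)$ is a valid route, and your argument for $(2)\Rightarrow(4)$---that the word ``clopen'' in Lemmas~\ref{lem:lots-o-bisections}--\ref{lem:unitary} and Proposition~\ref{prp:corner->stable iso} is never used beyond openness---is correct. The paper instead proves $(2)\Rightarrow(3)$ in one line by observing that $GX$ is a $G$--$G|_X$ equivalence and invoking transitivity; your route works but is longer.

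The real issue is the hard implication. Your setup for $(4)\Rightarrow(1)$ is sound, and since $A=r^{-1}(U)\cap s^{-1}(W)$ is an $M|_U$--$M|_W$ equivalence, you have reduced to exactly the problem the paper solves in proving $(3)\Rightarrow(1)$. But your diagnosis of the mechanism is wrong: the paper's back-and-forth does \emph{not} use the $\Rr$-factor or infinite orbits at all. It works directly in an arbitrary equivalence $Z$ between $G$ and $H$. Fixing partitions $\go=\bigsqcup U_j$ and $\ho=\bigsqcup W_j$ into compact open sets, one alternately covers $U_{J+1}$ and $W_{J+1}$ (modulo what is already saturated) by compact open bisections $V_i\subseteq Z$, trimming to keep ranges and sources disjoint. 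The key point you are missing is that when you remove from $V^1_i$ those elements whose source already lies in some earlier $s(V_{n_J+i'})$, the discarded range points remain in the $G$-\emph{saturation} of what you keep: the discarded element $\alpha$ and the retained element $\beta$ share a source, so $\alpha\beta^{-1}\in G$ connects their ranges. Thus fullness survives the trimming with no need to ``spread sources apart''. Closedness of $r(Y)$ and $s(Y)$ is arranged separately, by ensuring that for each $j$ only finitely many $V_i$ meet $U_j$ or $W_j$, so that $U_j\setminus r(Y)$ and $W_j\setminus s(Y)$ are open.

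So the proposal leaves the central construction undone, and the hint you give yourself---exploit the $\Rr$-factor as in Lemma~\ref{lem:unitary}---points away from the actual idea. The back-and-forth you need is the saturation-based one above, and it works in any equivalence bimodule, stabilised or not; this is also why the paper can prove the stronger implication $(3)\Rightarrow(1)$ directly.
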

\begin{proof}
By Theorem~\ref{thm:groupoidBGR}, it suffices to show that (\ref{it:Ke})--(\ref{it:ge})
are equivalent. That (\ref{it:Ke})${\implies}$(\ref{it:wKe}) is obvious. Suppose
that~(\ref{it:wKe}) holds. Then $GX$ is a $G$--$G|_X$ equivalence under the actions
determined by multiplication in $G$ (see the argument of
\cite[Lemma~6.1]{ClarkSims:JPAA15}).  Similarly, $YH$ is a $H$--$H|_Y$ equivalence. Since
groupoid equivalence is an equivalence relation, $G$ and $H$ are groupoid equivalent,
giving (\ref{it:wKe})${\implies}$(\ref{it:ge}).

Now suppose that $Z$ is a $G$--$H$-equivalence. In this proof, for $K \subseteq \go$ we
write $[K]_G$ for the saturation $r(GK)$ of $K$ in $\go$; similarly, for $K' \subseteq
\ho$, we write $[K']_H := r(HK')$. Let $L = G \sqcup Z \sqcup Z^{\operatorname{op}}
\sqcup H$ be the linking groupoid \cite[Lemma~3]{SimsWilliams}. Fix countable covers $\go
= \bigsqcup^\infty_{i=1} U_i$ and $\ho = \bigsqcup^\infty_{i=1} W_i$ by mutually disjoint
compact open sets.

\textbf{Claim.} There exist $V_1, V_2 \dots \subseteq Z$ and $n_1 \le n_2 \le \cdots$ in
$\NN$ such that
\begin{enumerate}[(i)]
\item \label{it:V1} the $V_i$ are compact open bisections with mutually disjoint
    ranges and sources;
\item \label{it:V2} $\bigcup^j_{l=1} U_l \subseteq \big[\bigcup^{n_j}_{i=1}
    r(V_i)\big]_G$ and $\bigcup^j_{l=1} W_l \subseteq \big[\bigcup^{n_j}_{i=1}
    s(V_i)\big]_H$ for all $j \in \NN$; and
\item \label{it:V3} $r(V_i) \cap U_j = \emptyset$ and $s(V_i) \cap W_j = \emptyset$
    for all $j \in \NN$ and $i > n_j$.
\end{enumerate}

We construct the $V_i$ iteratively. Suppose either that $J=0$, or that $J\ge 1$, $n_1 \le
n_2 \le \cdots \le n_J\in \NN$, and $V_1,V_2,\dots,V_{n_J} \subseteq Z$ satisfy
(\ref{it:V1})--(\ref{it:V3}) for all $j < J$.

The set $K := U_{J+1} \setminus \big[\bigcup^{n_J}_{i=1} r(V_i)\big]_G$ is compact. Fix
$\Gamma \subseteq Z$ with $r(\Gamma) = K$. Suppose that $J\ge 1$ and $s(\Gamma) \cap
\bigcup^J_{l=1} W_l \not= \emptyset$, say $\gamma \in \Gamma \cap s^{-1} \big(
\bigcup^J_{l=1} W_l \big)$. Then $\bigcup^J_{l=1} W_l \subseteq \big[\bigcup^{n_J}_{i=1}
s(V_i)\big]_H$ gives $s(\gamma) \in \big[\bigcup^{n_J}_{i=1} s(V_i)\big]_H$, so there
exist $i \le n_J$, $\alpha \in H^{s(\gamma)}$ and $\beta \in V_i$ with $s(\alpha) =
s(\beta)$. But then $\beta\alpha^{-1}\gamma^{-1} \in GK \cap r(V_i)G$, contradicting the
definition of $K$. So $s(\Gamma) \cap \bigcup^J_{l=1} W_l = \emptyset$. Similarly, if
$s(\gamma) = s(\beta)$ for some $\gamma \in \Gamma$ and $\beta \in V_i$ where $i \le
n_J$, then $\gamma\beta^{-1} \in KG \cap G r(V_i)$, which is impossible by definition of
$K$; so $s(\Gamma) \cap s(V_i) = \emptyset$ for $i \le n_J$. We also have $r(\Gamma) = K
\subseteq \bigcup_{l > J} U_l \setminus \bigcup^{n_J}_{i=1} r(V_i)$, so for each $\gamma
\in \Gamma$, there is a compact open bisection $V^0_\gamma \subseteq Z$ containing
$\gamma$ with $r(V^0_\gamma) \cap U_l = \emptyset = s(V^0_\gamma) \cap W_l$ for $l \le
J$, and $r(V^0_\gamma) \cap r(V_i) = \emptyset = s(V^0_\gamma) \cap s(V_i)$ for $i \le
n_J$. Since $K$ is compact, there are $V^0_1, \dots, V^0_m \in \{V_\gamma : \gamma \in
\Gamma\}$ with $K \subseteq \bigcup^m_{i=1} r(V^0_i)$. For $i \le m$, let $V^1_i := V^0_i
\setminus r^{-1}\big(\bigcup_{i' < i} r(V^0_{i'})\big)$; so $K \subseteq
\bigsqcup^m_{i=1} r(V^1_i)$. Let $V_{n_J + 1} := V^1_1$ and iteratively put $V_{n_J + i}
:= V^1_i \setminus s^{-1}\big(\bigcup_{i' < i} s(V_{n_J+i'})\big)$. Then $V_1, \dots,
V_{n_J+m}$ are compact open bisections with mutually disjoint ranges and sources such
that $r(V_i) \cap U_j = \emptyset$ for $j\le J$ and $i > n_j$.

We claim that $K \subseteq \big[\bigcup^m_{i=1} r(V_{n_J +i})\big]_G$. Fix $x \in K$. Then
there are $i \le m$ and $\alpha \in V^1_i$ with $x = r(\alpha)$. By definition of
$V_{n_J+i}$ there exists $1 \le i' \le i$ and $\beta \in V_{n_J +i'}$ with $s(\beta) =
s(\alpha)$. So $\alpha\beta^{-1} \in G^x \cap G \in r(V_{n_j + i'})$, forcing $x \in
\big[\bigcup^{n_j}_{i=1} r(V_i)\big]_G$.

Now let $K' = W_{J+1} \setminus \big[\bigcup^{n_J+m}_{i=1} s(V_i)\big]_H$. We repeat the
argument of the previous two paragraphs. Choose $\Lambda \subseteq Z$ with $s(\Lambda) =
K'$. As above, $r(\Lambda) \cap \big(\bigcup^J_{l=1} U_l \cup \bigcup^{n_J+m}_{i=1}
r(V_i)\big) = \emptyset$. For $\lambda \in \Lambda$ pick a compact open bisection
$V^0_\lambda \subseteq Z$ containing $\lambda$ with $r(V^0_\lambda) \cap
\big(\bigcup^J_{l=1} U_l \cup \bigcup^{n_J+m}_{i=1} r(V_i)\big) = \emptyset$, and
$s(V^0_\lambda) \cap \big(\bigcup^J_{l=1} W_l \cup \bigcup^{n_J+m}_{i=1} s(V_i)\big) =
\emptyset$. Use compactness and disjointify sources to obtain $V^1_{m+1}, \dots
V^1_{m+m'}$ with $K' \subseteq \bigsqcup^{m'}_{i=1} s(V^1_{m+i})$. Iteratively let
$V_{n_J + m + i} := V^1_{m+i} \setminus r^{-1}\big(\bigcup_{i' < i} r(V_{n_J + m +
i'})\big)$. As for $K$ above, $K' \subseteq \big[\bigcup^{n_J+m+p}_{i=1} s(V_i)\big]_H$.
Let $n_{J+1} = n_J + m + m'$. Then $V_1, \dots V_{n_{J+1}}$ satisfy
(\ref{it:V1})--(\ref{it:V3}) for $j < J+1$. The claim now follows by induction.

Now let $Y := \bigcup^\infty_{i=1} V_i$. Then~(\ref{it:V1}) guarantees that $Y$ is an
open bisection. By~(\ref{it:V2}), $r(Y)$ is $G$-full and $s(Y)$ is $H$-full. Since each
$V_i$ is a compact open bisection, the $r(V_i)$ and $s(V_i)$ are clopen. So $r(Y)$ and
$s(Y)$ are open. They are also closed: by~(\ref{it:V3}), each $U_j \setminus r(Y) = U_j
\setminus \bigcup^{n_j}_{i=1} r(V_i)$ is open, and likewise and each $W_j \setminus s(Y) = W_j \setminus
\bigcup^{n_j}_{i=1} s(V_i)$ is open; so $\go \setminus r(Y) = \bigcup_j U_j \setminus
r(Y)$ and $\ho \setminus s(Y) = \bigcup_j W_j \setminus s(Y)$ are open.

The map $\gamma \mapsto Y^{-1} \gamma Y$ from $G|_{r(Y)}$ to $H|_{s(Y)}$ is a groupoid
isomorphism just as in the proof of Proposition~\ref{prp:corner->stable iso}. Hence $G$
is Kakutani equivalent to $H$, giving (\ref{it:ge})${\implies}$(\ref{it:Ke}).
\end{proof}

\begin{cor}
Let $G$ and $H$ be ample groupoids with $\sigma$-compact unit spaces. Suppose that there
exists a $G$-full compact open subset of $\go$. Then $G$ and $H$ are groupoid equivalent
if and only if there are full compact open sets $X \subseteq \go$ and $Y \subseteq \ho$
such that $G|_X \cong H|_Y$.
\end{cor}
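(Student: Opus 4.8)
The plan is to read both implications off Theorem~\ref{thm:equiv equiv}, using the hypothesis only to promote clopen sets to compact open ones. The backward direction is immediate: in the Hausdorff space $\go$ a compact open set is closed, hence clopen, and likewise in $\ho$, so a pair of full compact open sets $X \subseteq \go$ and $Y \subseteq \ho$ with $G|_X \cong H|_Y$ is exactly a witness to Kakutani equivalence of $G$ and $H$ in the sense of the definition above. Theorem~\ref{thm:equiv equiv}, (\ref{it:Ke})${\implies}$(\ref{it:ge}), then gives that $G$ and $H$ are groupoid equivalent.

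For the forward direction, let $X_0 \subseteq \go$ be the $G$-full compact open set supplied by hypothesis. First I would reduce to a groupoid with compact unit space: since $X_0$ is $G$-full and clopen, $GX_0$ is a $G$--$G|_{X_0}$ equivalence (the argument of \cite[Lemma~6.1]{ClarkSims:JPAA15} invoked in the proof of Theorem~\ref{thm:equiv equiv}), so $G$ and $G|_{X_0}$ are groupoid equivalent. As groupoid equivalence is an equivalence relation and $G$ is groupoid equivalent to $H$, the groupoid $G|_{X_0}$ is groupoid equivalent to $H$; and its unit space is the \emph{compact} set $X_0$. Applying Theorem~\ref{thm:equiv equiv}, (\ref{it:ge})${\implies}$(\ref{it:Ke}), to $G|_{X_0}$ and $H$ produces a $(G|_{X_0})$-full clopen $X \subseteq X_0$ and an $H$-full clopen $Y \subseteq \ho$ with $G|_X = (G|_{X_0})|_X \cong H|_Y$. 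Since $X$ is clopen in the compact set $X_0$ it is compact, and it is open in $\go$ because $X_0$ is, so $X$ is compact open; a short orbit chase shows $X$ is $G$-full, for given $x \in \go$, $G$-fullness of $X_0$ yields $\gamma \in G^x$ with $s(\gamma) \in X_0$, and $(G|_{X_0})$-fullness of $X$ then yields $\delta \in G|_{X_0}$ with $r(\delta) = s(\gamma)$ and $s(\delta) \in X$, whence $\gamma\delta \in G^x$ has source in $X$.

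It remains to handle $Y$, and this is the one step that needs care. The isomorphism $G|_X \cong H|_Y$ restricts to a homeomorphism of unit spaces $X \cong Y$, so compactness of $X$ forces $Y$ to be compact; and $Y$ is open since it is clopen in $\ho$. Thus $Y$ is $H$-full and compact open, and $(X,Y)$ is the required pair. The crux is precisely this transfer of compactness across the isomorphism: it is what allows the merely clopen sets produced by Theorem~\ref{thm:equiv equiv} to be upgraded to compact open sets, and it explains why assuming a single $G$-full compact open subset of $\go$ automatically forces the analogous property on the $\ho$ side.
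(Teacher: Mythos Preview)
Your proof is correct, but it takes a different route from the paper. The paper works directly with the given $G$--$H$ equivalence $Z$: it re-enters the proof of Theorem~\ref{thm:equiv equiv} and runs only the very first step of the inductive construction (with $U_1 = X_0$) to produce a single \emph{compact} open bisection $V \subseteq Z$ whose range contains $X_0$; then $X := r(V)$ and $Y := s(V)$ are automatically compact open, fullness of $Y$ in $\ho$ is checked by a short orbit argument, and conjugation by $V$ gives $G|_X \cong H|_Y$. Your approach instead treats Theorem~\ref{thm:equiv equiv} as a black box: you first pass to $G|_{X_0}$ (whose unit space is compact), apply (\ref{it:ge})${\implies}$(\ref{it:Ke}) to $G|_{X_0}$ and $H$, and then observe that the resulting groupoid isomorphism restricts to a homeomorphism of unit spaces, transferring compactness from $X$ to $Y$. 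The paper's argument is more self-contained and constructive (it explicitly exhibits the bisection $V$ inside $Z$), while yours is more modular and arguably cleaner---it avoids re-opening the proof of Theorem~\ref{thm:equiv equiv}, and the compactness transfer via the unit-space homeomorphism is a nice touch that the paper does not use.
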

\begin{proof}
Let $X_0 \subseteq \go$ be a $G$-full compact open set. First suppose $G$ and $H$ are
groupoid equivalent; say $Z$ is a $G$--$H$ equivalence. Following the construction of
$V_1$ in the proof of Theorem~\ref{thm:equiv equiv}---with $U_1 = X_0$---gives a compact
open bisection $V \subseteq Z$ with $X_0 \subseteq [r(V)]$, and hence $\go = [X_0]
\subseteq [r(V)]$. Fix $y \in \ho$. Take $\gamma \in Z_y$. Take $\alpha \in
G_{r(\gamma)}$ with $r(\alpha) \in r(V)$; say $\beta \in V \cap Z^{r(\alpha)}$. Then
$\beta^{-1}\alpha\gamma \in s(V)H \cap H_y$. So $[s(V)] = \ho$. Now $X := r(V) \subseteq
\go$ and $Y := s(V) \subseteq \ho$ are full compact open sets and $\gamma \mapsto
V^{-1}\gamma V : G|_{X} \to H_{Y}$ is an isomorphism. This proves the ``${\implies}$''
direction. The ``${\Longleftarrow}$'' direction follows from
(\ref{it:wKe})${\implies}$(\ref{it:ge}) in Theorem~\ref{thm:equiv equiv}.
\end{proof}

\section{Consequences for graph algebras}\label{sec:consequences}

In this section we explore the consequences of Theorem~\ref{thm:groupoidBGR} for stable
isomorphism of graph $C^*$-algebras and of Leavitt path algebras.  First we introduce some terminology to state our main result.

If $E$ is a directed graph, then $SE$ denotes the graph obtained by appending a head
$\dots f_{3,v} f_{2,v} f_{1,v}$ at every vertex $v$. Theorem~4.2 of \cite{Tomforde} shows
that $C^*(SE) \cong C^*(E) \otimes \Kk$ (see also
\cite[Proposition~9.8]{AbramsTomforde}). We will show in Lemma~\ref{lem:stabgraph} that
this happens at the level of groupoids. First, we briefly describe the graph groupoid
$G_E$: if $E^\infty$ denotes the set of all infinite paths of $E$ and $E^*$ denotes the
set of all finite paths of $E$, define
\[
\partial E := E^\infty \cup \{x \in E^* : \text{ $r(x)$ is a sink or an infinite emitter}\}.
\]
For $\mu \in E^*$, define $Z(\mu) := \{\mu x : x \in \partial E, r(\mu) = s(x)\}$. Then
the sets $Z(\mu \setminus F) := Z(\mu) \setminus \bigcup_{\nu \in F} Z(\mu\nu)$ indexed
by $\mu \in E^*$ and finite subsets $F$ of $r(\mu)E^1$ form a basis of compact open sets
for a locally compact Hausdorff topology on $\partial E$. For each $n \ge 0$, the
\emph{shift map} $\sigma^n : \partial E^{\ge n} := \{x \in \partial E : |x| \ge n\} \to
\partial E$ given by $\sigma^n(\mu x) = x$ for $\mu \in E^n$ and $x \in r(\mu)\partial E$
is a local homeomorphism.

We write $G_E$ for the graph groupoid
\[\textstyle
G_E = \bigcup_{m,n \in \NN} \{(x,m-n,y) : x \in \partial E^{\ge m}, y \in \partial E^{\ge n}\text{ and }\sigma^m(x) = \sigma^n(y)\},
\]
where $r(x,m,y) = (x,0,x)$, $s(x,m,y) = (y,0,y)$ and $(x,m,y)(y,n,z) = (x, m+n, z)$. This
is an ample groupoid under the topology with basic open sets $Z(\alpha,\beta \setminus F)
:= \{(\alpha x, |\alpha|-|\beta|, \beta x) : x \in Z(r(\alpha) \setminus F)\}$ indexed by
triples $(\alpha,\beta, F)$ where $\alpha,\beta \in E^*$, and $F \subseteq r(\alpha) E^1$ is finite.

\begin{lem}\label{lem:stabgraph}
Let $E$ be a directed graph. Then $G_E \times \Rr \cong G_{SE}$ and $G_{SE} \cong G_{SE}
\times \Rr$.
\end{lem}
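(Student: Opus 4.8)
The plan is to prove the two isomorphisms separately, with the first being the conceptual heart of the matter and the second following by a stabilisation trick.

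For the isomorphism $G_E \times \Rr \cong G_{SE}$, I would try to exploit Theorem~\ref{thm:groupoidBGR}. The key observation is that $SE$ is built from $E$ by attaching, at each vertex $v$, an infinite head $\dots f_{3,v}f_{2,v}f_{1,v}$. These heads contribute new boundary paths to $\partial(SE)$ but they do not create any new shift-tail-equivalence classes that miss $\partial E$: every infinite path in $SE$ is shift-tail-equivalent to one lying in the copy of $\partial E$ sitting inside $\partial(SE)$. Concretely, I would identify $\partial E$ with the clopen subset $K := \{x \in \partial(SE) : x \text{ uses no head edge } f_{i,v}\}$ of $\partial(SE)^{(0)} = G_{SE}^{(0)}$, and check two things: first, that $K$ is clopen (it should be a union of basic cylinder sets, or a complement of such), and second, that $K$ is $G_{SE}$-full, i.e. $r(G_{SE}K) = G_{SE}^{(0)}$. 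Fullness amounts to the statement that every boundary path of $SE$ eventually shifts into $\partial E$, which is immediate since shifting past the finitely-traversed head edges lands one on a path through $E$. Granting this, Proposition~\ref{prp:corner->stable iso} gives $G_{SE} \times \Rr \cong (G_{SE})|_K \times \Rr$, so it remains to identify $(G_{SE})|_K$ with $G_E$. The reduction $(G_{SE})|_K$ consists of those $(x,m-n,y) \in G_{SE}$ with both $x,y \in K$; I would argue that such a triple is exactly a triple of boundary paths of $E$ with a matching shift-tail relation, yielding a groupoid isomorphism $(G_{SE})|_K \cong G_E$ that is the obvious restriction of coordinates. Combining, $G_{SE} \times \Rr \cong G_E \times \Rr$.

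For the second isomorphism $G_{SE} \cong G_{SE} \times \Rr$, I would look for a $G_{SE}$-full clopen subset of $G_{SE}^{(0)}$ that is ``small'' enough to absorb a copy of $\Rr$. The heads of $SE$ furnish exactly such a supply of room: since every vertex $v$ of $SE$ receives an infinite head, the graph $SE$ has a built-in self-similarity that lets one split $G_{SE}^{(0)}$ into countably many mutually $G_{SE}$-equivalent clopen pieces. More directly, I expect $SSE$ (appending heads to the graph that already has heads) to produce a groupoid isomorphic to $G_{SE}$ itself, because attaching a head to a vertex that is already the head of an infinite head does not change the shift-tail structure up to isomorphism; this would give $G_{SE} \cong G_{S(SE)} \cong G_{SE} \times \Rr$ by applying the first part to the graph $SE$ in place of $E$. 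Alternatively, one checks directly that the copy $K$ of $\partial E$ inside $\partial(SE)$ is $G_{SE}$-full and that its complement still meets every orbit, so that $G_{SE}$ and $(G_{SE})|_K \cong G_E$ are stably isomorphic \emph{and} $G_{SE}$ itself already contains the requisite disjoint shifted copies.

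\textbf{The main obstacle} I anticipate is the bookkeeping in identifying $(G_{SE})|_K$ with $G_E$ precisely as topological groupoids, and in verifying that $K$ is genuinely clopen and $G_{SE}$-full in the non-row-finite setting, where $\partial(SE)$ contains finite paths terminating at sinks or infinite emitters. The fullness argument must account for these finite boundary paths and ensure that shifting them along the heads still lands in $K$; and the clopenness of $K$ must be argued from the cylinder-set basis rather than assumed. Once $K$ is shown to be a $G_{SE}$-full clopen set with $(G_{SE})|_K \cong G_E$, both displayed isomorphisms fall out of Proposition~\ref{prp:corner->stable iso}, the first directly and the second by applying the same machinery to $SE$.
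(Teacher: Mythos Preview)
Your route via Proposition~\ref{prp:corner->stable iso} does not reach the stated lemma. With $K$ the copy of $\partial E$ inside $G_{SE}^{(0)}$, that proposition gives only $G_{SE}\times\Rr \cong (G_{SE})|_K \times\Rr \cong G_E\times\Rr$, which is strictly weaker than the claim $G_{SE}\cong G_E\times\Rr$. You seem to half-recognise this, since your second paragraph tries to establish $G_{SE}\cong G_{SE}\times\Rr$ separately; combining that with your first conclusion would indeed recover both statements of the lemma. But your proposed arguments for $G_{SE}\cong G_{SE}\times\Rr$ are not complete. The claim $G_{SE}\cong G_{S(SE)}$ is true, but proving it requires exactly the kind of explicit parametrisation of $\partial(S(SE))$ by $\partial E\times\NN\times\NN\cong\partial E\times\NN$ that your indirect approach was meant to avoid; and note that the outer $S$ attaches heads to \emph{every} vertex of $SE$, including the head vertices $u_{i,v}$, so your heuristic ``attaching a head to a vertex that already has a head changes nothing'' is not the correct picture. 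The closing ``alternatively'' sentence is too vague to constitute an argument.

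The paper instead proceeds by a direct, elementary construction. It observes that every element of $\partial(SE)$ has the form $\mu_{i,s(x)}x$ for a unique $x\in\partial E$ and $i\ge 0$, where $\mu_{i,v}=f_{i,v}\cdots f_{1,v}$ is the head prefix of length~$i$. This immediately gives a homeomorphism $\phi:\partial(SE)\to\partial E\times\NN$, $\mu_{i,s(x)}x\mapsto(x,i)$, and one checks by hand that $((x,m,y),(i,j))\mapsto(\phi^{-1}(x,i),\,m+i-j,\,\phi^{-1}(y,j))$ is a groupoid isomorphism $G_E\times\Rr\to G_{SE}$. The second statement then follows in one line from $\Rr\times\Rr\cong\Rr$. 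This is both shorter and sharper than the corner-plus-stabilisation detour, and it delivers the unstabilised isomorphism directly rather than having to bootstrap back to it.
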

\begin{proof}
For each $v \in E^0$, write $\mu_{0,v} := v$ and for $i \ge 1$ write $\mu_{i,v} :=
f_{i,v} f_{i-1,v} \dots f_{1,v}$. Then $\partial(SE) = \{\mu_{i,s(x)}x : x \in
\partial E, i \in \NN\}$. The map $\phi : \mu_{i,s(x)} x \mapsto (x, i)$ is a homeomorphism from
$\partial(SE)$ to $\partial E \times \NN$: cylinder sets of the form
$Z(\mu_{i,s(\lambda)}\lambda \setminus F)$ are a basis of compact open sets for
$\partial(SE)$, and $\phi$ restricts to a continuous bijection of each $Z(\mu_{i,
s(\lambda)}\lambda \setminus F)$ onto the compact open set $Z(\lambda \setminus F) \times
\{i\}$. It is routine to check that $\big((x,m,y),(i,j)\big) \mapsto \big(\phi^{-1}(x,i),
m+i-j, \phi^{-1}(y,j)\big)$ is a groupoid isomorphism from $G_E \times \Rr$ to $G_{SE}$.
Since $\Rr \times \Rr \cong \Rr$, we obtain $G_{SE} \cong G_{SE} \times \Rr$ as well.
\end{proof}

Recall \cite{BCW, MatsumotoMatui} that graphs $E$ and $F$ are \emph{orbit equivalent} if
there exist a homeomorphism $h : \partial E \to \partial F$ and continuous functions $k,l
: \partial E^{\ge 1} \to \NN$ and $k',l' : \partial F^{\ge 1} \to \NN$ such
that $\sigma^{k(x)}_F(h(\sigma_E(x))) = \sigma^{l(x)}_F(h(x))$ and
$\sigma^{k'(y)}_E(h^{-1}(\sigma_F(y))) = \sigma^{l'(y)}_E(h^{-1}(y))$ for all $x \in
\partial E^{\ge 1}$ and $y \in \partial F^{\ge 1}$.

We assume familiarity with graph $C^*$-algebras and Leavitt path algebras; see
\cite{BHRS} and \cite{Tomforde2} for the requisite background. Given a graph $E$, we call
the abelian subalgebra $D(E) := \clsp\{s_\mu s^*_\mu : \mu \in E^*\} \subseteq C^*(E)$
the \emph{diagonal subalgebra} of the graph $C^*$-algebra, and for any commutative ring
$R$ with~1, we call the abelian subalgebra $D_R(E) := \lsp_R\{s_\mu s_{\mu^*} : \mu \in
E^*\} \subseteq L_R(E)$ the diagonal subalgebra of the Leavitt path $R$-algebra. For an
ample groupoid $G$, we write $C^*(G)$ for the (full) $C^*$-algebra of $G$ (see for
example \cite{Renault80} or \cite{Paterson}) and, for a commutative ring $R$ with~1, we
write $A_R(G)$ for the Steinberg algebra of $G$ over $R$ (see \cite{Steinberg} and
\cite{ClarkSims:JPAA15}). Any isomorphism of ample groupoids $G$ and $H$ induces an isomorphism $C^*(G)\cong C^*(H)$ carrying $C_0(G^{(0)})$ to $C_0(H^{(0)})$ and an isomorphism $A_R(G)\cong A_R(H)$ carrying $A_R(G^{(0)})$ to $A_R(H^{(0)})$.
The canonical isomorphism $C^*(E) \cong C^*(G_E)$ carries
$D(E)$ to the standard diagonal subalgebra $C_0(\go_E) \subseteq C^*(G_E)$ (see the proof
of \cite[Proposition~4.1]{KPRR} and \cite[Proposition 2.2]{BCW}), and likewise at the
level of Leavitt path algebras \cite[Example~3.2]{ClarkSims:JPAA15}. We say that an
isomorphism $\phi : C^*(E) \to C^*(F)$ of graph $C^*$-algebras is \emph{diagonal
preserving} if $\phi(D(E)) = D(F)$, and likewise for Leavitt path algebras.

We write $\Kk$ for the $C^*$-algebra of compact operators on $\ell^2(\NN)$, and $\Cc$ for
the maximal abelian subalgebra of $\Kk$ consisting of diagonal operators. For a
commutative ring $R$ with~1 we write $M_\infty (R)$ for the ring of finitely supported,
countably infinite square matrices over $R$ and $D_\infty (R)$ for the abelian subring of
$M_\infty(R)$ consisting of diagonal matrices. For any ample groupoid $G$, if $\Rr$ is
the equivalence relation $\NN \times \NN$ of Section~\ref{sec:BGR}, there exist
isomorphisms $C^*(G \times \Rr) \cong C^*(G) \otimes \Kk$ and $A_R(G \times \Rr) \cong
A_R(G) \otimes M_\infty(R)$ that take $C_0(\go \times \NN)$ to $C_0(\go) \otimes \Cc$ and
$A_R(\go \times \NN)$ to $A_R(\go) \otimes D_\infty(R)$.

An isomorphism $\phi : C^*(E)\otimes\Kk \to C^*(F)\otimes\Kk$ is \emph{diagonal
preserving} if $\phi(D(E)\otimes\Cc) = D(F)\otimes\Cc$, and similarly at the level of
Leavitt path algebras.

\begin{thm}\label{thm:main}
Let $E$ and $F$ be directed graphs, and let $R$ be a commutative integral domain with~1.
The following are equivalent:
\begin{enumerate}
\item\label{it:CE sdcong CF} there is a diagonal-preserving isomorphism $C^*(E)
    \otimes \Kk \cong C^*(F) \otimes \Kk$;
\item\label{it:LE sdcong LF} there is a diagonal-preserving $^*$-ring isomorphism
    $L_R(E) \otimes M_\infty(R) \cong L_R(F) \otimes M_\infty(R)$;
\item\label{it:CSE dcong CSF} there is a diagonal-preserving isomorphism $C^*(SE)
    \cong C^*(SF)$;
\item\label{it:LSE dcong LSF} there is a diagonal-preserving $^*$-ring isomorphism
    $L_R(SE) \cong L_R(SF)$;
\item\label{it:E sge F} $G_E \times \Rr \cong G_F \times \Rr$;
\item\label{it:SE ge SF} $G_{SE} \cong G_{SF}$. \setcounter{saver}{\value{enumi}}
\end{enumerate}
These equivalent conditions imply each of
\begin{enumerate}\setcounter{enumi}{\value{saver}}
\item\label{it:SE oe SF} $SE$ and $SF$ are orbit equivalent;
\item\label{it:LE sdcong LF'} there is a diagonal-preserving ring isomorphism $L_R(E)
    \otimes M_\infty(R) \cong L_R(F) \otimes M_\infty(R)$; and
\item\label{it:LSE dcong LSF'} there is a diagonal-preserving ring isomorphism
    $L_R(SE) \cong L_R(SF)$.
\end{enumerate}
The conditions (\ref{it:LE sdcong LF'})~and~(\ref{it:LSE dcong LSF'}) are equivalent. If
every cycle in each of $E$ and $F$ has an exit, then (\ref{it:CE sdcong
CF})--(\ref{it:LSE dcong LSF'}) are all equivalent.
\end{thm}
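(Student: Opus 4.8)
The plan is to run a cycle of implications through condition~(\ref{it:SE ge SF}), namely $G_{SE}\cong G_{SF}$, and then read off (\ref{it:SE oe SF})--(\ref{it:LSE dcong LSF'}) as consequences. First I would record the ``stabilisation'' equivalences, which use only Lemma~\ref{lem:stabgraph} and the canonical identifications quoted before the theorem, and no reconstruction. Since Lemma~\ref{lem:stabgraph} gives $G_{SE}\cong G_E\times\Rr$ and $G_{SF}\cong G_F\times\Rr$, conditions (\ref{it:E sge F}) and (\ref{it:SE ge SF}) are literally the same statement, so (\ref{it:E sge F})$\iff$(\ref{it:SE ge SF}). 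For (\ref{it:CE sdcong CF})$\iff$(\ref{it:CSE dcong CSF}) I would splice together the diagonal-preserving isomorphisms $C^*(E)\otimes\Kk\cong C^*(G_E)\otimes\Kk\cong C^*(G_E\times\Rr)\cong C^*(G_{SE})\cong C^*(SE)$, checking at each link that the relevant diagonal is carried to the relevant diagonal: the first from $C^*(E)\cong C^*(G_E)$ with $D(E)\mapsto C_0(\go_E)$, the second from $C^*(G\times\Rr)\cong C^*(G)\otimes\Kk$ taking $C_0(\go)\otimes\Cc$ to the diagonal, the third induced by $G_E\times\Rr\cong G_{SE}$, and the last from $C^*(SE)\cong C^*(G_{SE})$. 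The upshot is that (\ref{it:CE sdcong CF}) and (\ref{it:CSE dcong CSF}) describe the same isomorphism up to these fixed identifications. The identical chain with $A_R(-)$, $M_\infty(R)$ and $D_\infty(R)$ replacing $C^*(-)$, $\Kk$ and $\Cc$ gives (\ref{it:LE sdcong LF})$\iff$(\ref{it:LSE dcong LSF}) as $^*$-ring isomorphisms and, forgetting the involution, (\ref{it:LE sdcong LF'})$\iff$(\ref{it:LSE dcong LSF'}) as ring isomorphisms.

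I would then close the loops (\ref{it:SE ge SF})$\iff$(\ref{it:CSE dcong CSF}) and (\ref{it:SE ge SF})$\iff$(\ref{it:LSE dcong LSF}). The implications (\ref{it:SE ge SF})$\implies$(\ref{it:CSE dcong CSF}) and (\ref{it:SE ge SF})$\implies$(\ref{it:LSE dcong LSF}) are instances of the quoted fact that an isomorphism of ample groupoids induces diagonal-preserving isomorphisms of the associated $C^*$- and Steinberg algebras. The reverse implications are the substantive input, imported from the reconstruction literature: a diagonal-preserving isomorphism $C^*(SE)\cong C^*(SF)$ gives $G_{SE}\cong G_{SF}$ by \cite{BCW} (or \cite{Carlsen}), and a diagonal-preserving $^*$-ring isomorphism $L_R(SE)\cong L_R(SF)$ gives $G_{SE}\cong G_{SF}$ by \cite{BCaH, ABHS, Carlsen}. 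These apply to arbitrary graphs because the $C^*$- respectively $^*$-ring structure rigidifies the Weyl-groupoid reconstruction even where the groupoids are not effective. Combining with the previous paragraph, (\ref{it:CE sdcong CF})--(\ref{it:SE ge SF}) are all equivalent.

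For the implications into the weaker conditions I would argue that (\ref{it:SE ge SF})$\implies$(\ref{it:SE oe SF}) because a groupoid isomorphism $G_{SE}\cong G_{SF}$ restricts to an orbit equivalence of $SE$ and $SF$ by \cite{BCW}; that (\ref{it:LE sdcong LF})$\implies$(\ref{it:LE sdcong LF'}) and (\ref{it:LSE dcong LSF})$\implies$(\ref{it:LSE dcong LSF'}) hold by forgetting the involution; and that (\ref{it:LE sdcong LF'})$\iff$(\ref{it:LSE dcong LSF'}) is the ring-level stabilisation equivalence already obtained. This establishes everything asserted with no hypothesis on cycles.

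Finally, under the hypothesis that every cycle in $E$ and in $F$ has an exit, I would upgrade (\ref{it:SE oe SF}) and (\ref{it:LSE dcong LSF'}) back to (\ref{it:SE ge SF}). The key preliminary observation is that appending heads creates no new cycles, so $SE$ and $SF$ inherit this condition and $G_{SE}$ and $G_{SF}$ are effective. With effectiveness in hand, orbit equivalence of $SE$ and $SF$ recovers $G_{SE}\cong G_{SF}$, closing (\ref{it:SE oe SF})$\implies$(\ref{it:SE ge SF}); and a diagonal-preserving plain ring isomorphism $L_R(SE)\cong L_R(SF)$ recovers $G_{SE}\cong G_{SF}$ by the Steinberg-algebra reconstruction of \cite{ABHS}, which is exactly where the integral-domain hypothesis on $R$ is used, closing (\ref{it:LSE dcong LSF'})$\implies$(\ref{it:SE ge SF}). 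The main obstacle is not any single computation---the reconstruction theorems are black boxes---but the careful bookkeeping of which ingredient each implication actually requires: tracking the diagonal-preserving property link-by-link through the stabilisation chains, and isolating effectiveness as precisely the extra hypothesis that promotes orbit equivalence and plain ring isomorphism, which see less than the $C^*$- or $^*$-ring structure, up to groupoid isomorphism.
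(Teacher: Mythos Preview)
Your overall architecture matches the paper's almost exactly: the stabilisation equivalences (\ref{it:CE sdcong CF})$\iff$(\ref{it:CSE dcong CSF}), (\ref{it:LE sdcong LF})$\iff$(\ref{it:LSE dcong LSF}), (\ref{it:E sge F})$\iff$(\ref{it:SE ge SF}), (\ref{it:LE sdcong LF'})$\iff$(\ref{it:LSE dcong LSF'}) via Lemma~\ref{lem:stabgraph}; the implication (\ref{it:CSE dcong CSF})$\iff$(\ref{it:SE ge SF}) via \cite{BCW}; the forward implications into (\ref{it:SE oe SF})--(\ref{it:LSE dcong LSF'}); and, under the exit hypothesis, the upgrades (\ref{it:SE oe SF})$\implies$(\ref{it:SE ge SF}) via \cite{BCW} and (\ref{it:LSE dcong LSF'})$\implies$(\ref{it:SE ge SF}) via \cite{ABHS}. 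All of this is right.

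The gap is in your claimed direct implication (\ref{it:LSE dcong LSF})$\implies$(\ref{it:SE ge SF}). You assert that \cite{BCaH, ABHS, Carlsen} reconstruct $G_{SE}$ from a diagonal-preserving $^*$-ring isomorphism $L_R(SE)\cong L_R(SF)$ ``for arbitrary graphs because the $^*$-ring structure rigidifies the Weyl-groupoid reconstruction even where the groupoids are not effective.'' But none of those references delivers this as stated: \cite[Theorem~6.2]{BCaH} requires the graphs to be row-finite with no sinks, which $SE$ need not be when $E$ has sinks or infinite emitters; \cite[Corollary~4.4]{ABHS} is precisely the result that needs effectiveness (this is why the paper only invokes it under the cycle-exit hypothesis); and \cite{Carlsen} is used here for a different purpose. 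Your sentence asserting rigidification is exactly the step that needs proof.

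The paper closes this loop differently, proving (\ref{it:LE sdcong LF})$\implies$(\ref{it:E sge F}) by a detour through Drinen--Tomforde desingularisations $E'$, $F'$. Using Crisp and Gow's collapsing procedure and Lemma~\ref{lem:collapse} (which in turn rests on Theorem~\ref{thm:groupoidBGR}), one gets $G_E\times\Rr\cong G_{E'}\times\Rr\cong G_{SE'}$, so the hypothesised diagonal-preserving $^*$-isomorphism transports to $L_R(SE')\cong L_R(SF')$. Now $SE'$ and $SF'$ \emph{are} row-finite with no sinks, so \cite[Theorem~6.2]{BCaH} applies and yields $G_{SE'}\cong G_{SF'}$, hence $G_E\times\Rr\cong G_F\times\Rr$. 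You should either insert this desingularisation argument, or cite a reconstruction theorem that genuinely covers arbitrary graphs in the $^*$-ring setting.
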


Our proof of Theorem~\ref{thm:main} uses Crisp and Gow's collapsing procedure
\cite{CrispGow}.

\begin{lem}\label{lem:collapse}
Let $E$ be a directed graph, let $T$ be a collapsible subgraph of $E$ in the sense of
Crisp and Gow, and let $F$ be the graph obtained from $E$ by collapsing $T$. Then $G_E
\times \Rr \cong G_F \times \Rr$, and there are diagonal-preserving isomorphisms $C^*(E)
\otimes \Kk \cong C^*(F) \otimes \Kk$ and $L_R(E) \otimes M_\infty(R) \cong L_R(F)
\otimes M_\infty(R)$ for every commutative unital ring $R$.
\end{lem}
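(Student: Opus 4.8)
The plan is to realise the collapsed graph groupoid $G_F$ as a full corner of $G_E$ and then apply the stabilisation results of Section~\ref{sec:BGR}. Write $F^0 = E^0 \setminus T^0$ for the vertices surviving the collapse, and set
\[
K := \{x \in \partial E : s(x) \in F^0\} \subseteq \go_E = \partial E.
\]
First I would check that $K$ is clopen, being the union of the basic compact open cylinders $Z(v)$ over $v \in F^0$, and that it is $G_E$-full. Fullness is where the defining properties of a collapsible subgraph enter: since $T$ contains no cycle and traps no boundary path within $T^0$, every $x \in \partial E$ admits a shift $\sigma^n(x)$ with source in $F^0$, so $x$ is tail-equivalent to an element of $K$ and hence $r(G_E K) = \go_E$.

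The technical heart is a homeomorphism $\psi : K \to \partial F$ intertwining the groupoid structures. Collapsing $T$ replaces each first-return excursion of a path from $F^0$ into $T$ and back to $F^0$ by a single edge of $F$; the combinatorics of Crisp and Gow \cite{CrispGow} furnish a bijection between such $E$-excursions and the edges of $F$ created by the collapse. I would use this to define $\psi$ by contracting every $T$-excursion in $x \in K$ to the corresponding $F$-edge, verify that $\psi$ carries each basic cylinder of $K$ onto a basic cylinder of $\partial F$ so that $\psi$ is a homeomorphism, and then confirm that
\[
(x, p, y) \mapsto (\psi(x),\, p',\, \psi(y)),
\]
with $p'$ recording $p$ adjusted by the net change in length produced by contracting the $T$-excursions in the relevant initial segments of $x$ and $y$, is a groupoid isomorphism $G_E|_K \cong G_F$. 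The integer coordinate is reparametrised, but this is immaterial for a groupoid isomorphism. I expect this to be the main obstacle: making the contraction map precise, and checking that it respects cylinder sets and the cocycle, requires carefully unwinding the Crisp--Gow definition of collapsibility; the fullness and compactness bookkeeping are then routine.

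Granting $G_E|_K \cong G_F$, Proposition~\ref{prp:corner->stable iso} applied to the clopen $G_E$-full set $K$ yields
\[
G_E \times \Rr \cong G_E|_K \times \Rr \cong G_F \times \Rr,
\]
which is the groupoid assertion of the lemma. For the algebra assertions I would transport this isomorphism through the functorial constructions recalled just before the lemma. A groupoid isomorphism $G_E \times \Rr \cong G_F \times \Rr$ induces a $^*$-isomorphism $C^*(G_E \times \Rr) \cong C^*(G_F \times \Rr)$ and a $^*$-ring isomorphism $A_R(G_E \times \Rr) \cong A_R(G_F \times \Rr)$, each carrying unit-space subalgebra to unit-space subalgebra.

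Composing these with the identifications $C^*(G \times \Rr) \cong C^*(G) \otimes \Kk$ and $A_R(G \times \Rr) \cong A_R(G) \otimes M_\infty(R)$, which send $C_0(\go) \otimes \Cc$ and $A_R(\go) \otimes D_\infty(R)$ to the respective diagonals, and with the canonical diagonal-preserving isomorphisms $C^*(E) \cong C^*(G_E)$ and $L_R(E) \cong A_R(G_E)$ (and their analogues for $F$), I obtain the desired diagonal-preserving isomorphisms $C^*(E) \otimes \Kk \cong C^*(F) \otimes \Kk$ and $L_R(E) \otimes M_\infty(R) \cong L_R(F) \otimes M_\infty(R)$, completing the proof.
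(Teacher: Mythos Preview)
Your approach is correct in outline and leads to the same conclusion, but it differs from the paper's proof in where the work is placed. The paper's argument is two lines: it invokes \cite[Proposition~6.2]{ClarkSims:JPAA15} to conclude that $G_E$ and $G_F$ are groupoid equivalent, and then applies Theorem~\ref{thm:groupoidBGR} to obtain $G_E \times \Rr \cong G_F \times \Rr$; the algebra statements follow as you describe. You instead propose to build, from scratch, a full clopen reduction $G_E|_K \cong G_F$ and then invoke Proposition~\ref{prp:corner->stable iso}. What you are sketching is precisely the content of the cited Clark--Sims proposition: that paper identifies $G_F$ with the restriction of $G_E$ to the boundary paths with source in $E^0 \setminus T^0$, which is exactly your $K$. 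So your route is more self-contained, but the ``main obstacle'' you flag---making $\psi$ and the reparametrised cocycle precise and checking they respect the topology---is nontrivial and is exactly the work the paper outsources to \cite{ClarkSims:JPAA15}. Both arguments arrive at $G_E \times \Rr \cong G_F \times \Rr$ via the same stabilisation machinery; the only difference is whether the Kakutani-type equivalence $G_E|_K \cong G_F$ is cited or reproved.
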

\begin{proof}
Proposition~6.2 of \cite{ClarkSims:JPAA15} shows that $G_E$ and $G_F$ are equivalent
groupoids. So Theorem~\ref{thm:groupoidBGR} gives $G_E \times \Rr \cong G_F \times \Rr$.
The result then follows because the canonical isomorphisms $C^*(E) \otimes \Kk \cong C^*(G_E) \otimes \Kk$ and
$L_R(E) \otimes M_\infty(R) \cong A_R(G_E) \otimes M_\infty(R)$ are diagonal preserving.
\end{proof}

\begin{proof}[Proof of Theorem~\ref{thm:main}]
Lemma~\ref{lem:stabgraph} yields (\ref{it:E sge F})${\iff}$(\ref{it:SE ge SF}); and also
(\ref{it:CE sdcong CF})${\iff}$(\ref{it:CSE dcong CSF}) and (\ref{it:LE sdcong
LF})${\iff}$(\ref{it:LSE dcong LSF}) since the canonical isomorphisms $C^*(E) \cong
C^*(G_E)$ and $L_R(E) \cong A_R(G_E)$ are diagonal preserving. Theorem~5.1 of \cite{BCW}
gives (\ref{it:CSE dcong CSF})${\iff}$(\ref{it:SE ge SF}). Isomorphisms of groupoids
induce diagonal-preserving $^*$-ring isomorphisms of Steinberg algebras, giving
(\ref{it:SE ge SF})${\implies}$(\ref{it:LSE dcong LSF}). To prove equivalence of
(\ref{it:CE sdcong CF})--(\ref{it:SE ge SF}), it now suffices to check (\ref{it:LE sdcong
LF})${\implies}$(\ref{it:E sge F}).

Suppose that~(\ref{it:LE sdcong LF}) holds. The graphs $E$ and $F$ can be obtained from
their Drinen--Tomforde desingularisations $E'$ and $F'$ by applications of Crisp and
Gow's collapsing procedure \cite{CrispGow}. So Lemma~\ref{lem:collapse} gives $G_E \times
\Rr \cong G_{E'} \times \Rr \cong G_{SE'}$, and similarly for $F$. Hence the
diagonal-preserving $^*$-ring isomorphism $L_R(E) \otimes M_\infty(R) \cong L_R(F)
\otimes M_\infty(R)$ induces a diagonal-preserving $^*$-ring isomorphism $L_R(SE') \cong
L_R(SF')$. Since $SE'$ and $SF'$ are row-finite with no sinks, we can apply
\cite[Theorem~6.2]{BCaH}\footnote{ Theorem~6.2 of \cite{BCaH} says ``no sources" rather
than ``no sinks" as they use the convention that $s_e^*s_e = p_{s(e)}$ rather than
$s_e^*s_e = p_{r(e)}$.} to obtain $G_{SE'} \cong G_{SF'}$. Since $G_E \times \Rr \cong
G_{SE'}$ and $G_F \times \Rr \cong G_{SF'}$, this yields $G_E \times \Rr \cong G_F \times
\Rr$.

That (\ref{it:SE ge SF})${\implies}$(\ref{it:SE oe SF}) follows from \cite[Theorem~5.1
(2)${\implies}$(4)]{BCW}. Clearly (\ref{it:LE sdcong LF})${\implies}$(\ref{it:LE sdcong
LF'}) and (\ref{it:LSE dcong LSF}){$\implies$}(\ref{it:LSE dcong LSF'}). We have
(\ref{it:LE sdcong LF'})${\iff}$(\ref{it:LSE dcong LSF'}) by another application of
Lemma~\ref{lem:stabgraph}.

Now suppose that every cycle in each of $E$ and $F$ has an exit. Then \cite[Theorem~5.1
(4)${\implies}$(2)]{BCW} gives (\ref{it:SE oe SF})${\implies}$(\ref{it:SE ge SF}), and
\cite[Corollary~4.4]{ABHS} gives (\ref{it:LSE dcong LSF'})${\implies}$(\ref{it:SE ge
SF}).
\end{proof}

We now deduce a ``diagonal-preserving'' version of \cite[Corollary~2.6]{Brown:PJM77} for
groupoid $C^*$-algebras and Steinberg algebras from Proposition~\ref{prp:corner->stable
iso}. We do not require that $G$ is Hausdorff (see for example \cite{Paterson} for the
definition of the $C^*$-algebra of a non-Hausdorff groupoid, and \cite{Steinberg} for the
definition of the Steinberg algebra of a non-Hausdorff groupoid).

\begin{lem}\label{lem:corner->stable iso}
Let $G$ be an ample groupoid such that $\go$ is $\sigma$-compact and let $R$ be a ring.
Suppose that $K\subseteq \go$ is clopen and $G$-full. Then
\begin{enumerate}
\item\label{it:C*stable} there is an isomorphism $\phi:C^*(G)\otimes\Kk\to
    C^*(G|_K)\otimes\Kk$ such that $\phi(C_0(G^{(0)})\otimes\Cc)=C_0(K)\otimes\Cc$;
    and
\item\label{it:SteinbergStable} there is a $^*$-ring isomorphism $\eta:A_R(G)\otimes
    M_\infty (R)\to A_R(G|_K)\otimes M_\infty (R)$ such that
    $\eta(A_R(G^{(0)})\otimes D_\infty (R))=A_R(K)\otimes D_\infty (R)$.
\end{enumerate}
\end{lem}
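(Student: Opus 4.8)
The plan is to deduce both statements directly from the groupoid isomorphism $G\times\Rr\cong G|_K\times\Rr$ supplied by Proposition~\ref{prp:corner->stable iso}, using only the functoriality of $C^*(-)$ and $A_R(-)$ under groupoid isomorphisms and the stabilisation identifications $C^*(G\times\Rr)\cong C^*(G)\otimes\Kk$ and $A_R(G\times\Rr)\cong A_R(G)\otimes M_\infty(R)$ recorded just before the statement. Accordingly, the first step is to invoke Proposition~\ref{prp:corner->stable iso} to fix a groupoid isomorphism $\psi\colon G\times\Rr\to G|_K\times\Rr$. Being a groupoid isomorphism, $\psi$ restricts to a homeomorphism of unit spaces, and since $(G\times\Rr)^{(0)}=\go\times\NN$ while $(G|_K\times\Rr)^{(0)}=(G|_K)^{(0)}\times\NN=K\times\NN$, this restriction carries $\go\times\NN$ onto $K\times\NN$.

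For part~(\ref{it:C*stable}) I would then take $\phi$ to be the composite
\[
C^*(G)\otimes\Kk\xrightarrow{\cong}C^*(G\times\Rr)\xrightarrow{C^*(\psi)}C^*(G|_K\times\Rr)\xrightarrow{\cong}C^*(G|_K)\otimes\Kk,
\]
where the outer two isomorphisms are the stabilisation maps quoted above and the middle one is induced by $\psi$. Each factor sends diagonal to diagonal: the stabilisation maps send $C_0(\go)\otimes\Cc$ to $C_0(\go\times\NN)$ and $C_0(K\times\NN)$ to $C_0(K)\otimes\Cc$, while $C^*(\psi)$ sends $C_0(\go\times\NN)$ to $C_0\big((G|_K\times\Rr)^{(0)}\big)=C_0(K\times\NN)$ precisely because $\psi$ carries the one unit space onto the other. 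Tracking these images through the composite yields $\phi(C_0(\go)\otimes\Cc)=C_0(K)\otimes\Cc$, as required.

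Part~(\ref{it:SteinbergStable}) goes through verbatim with $A_R(-)$, $M_\infty(R)$ and $D_\infty(R)$ in place of $C^*(-)$, $\Kk$ and $\Cc$; the only additional remark is that the map induced by $\psi$ on Steinberg algebras is a $^*$-ring isomorphism, since a groupoid isomorphism intertwines the involutions $f\mapsto f^*$ defined by $f^*(\gamma)=f(\gamma^{-1})$, and the stabilisation isomorphism is likewise $^*$-preserving. The genuine content here all lives in Proposition~\ref{prp:corner->stable iso}, so I do not anticipate a substantial obstacle; the only points that demand care are the book-keeping verification that $C^*(\psi)$ (respectively its Steinberg analogue) really does carry $C_0(\go\times\NN)$ onto $C_0(K\times\NN)$ rather than onto some larger subalgebra --- which is exactly the unit-space assertion of the first paragraph --- and the observation that the quoted functoriality of $C^*(-)$ and $A_R(-)$ remains valid in the possibly non-Hausdorff setting, which it does because those constructions are defined for arbitrary ample groupoids.
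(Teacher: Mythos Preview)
Your proposal is correct and follows essentially the same argument as the paper: invoke Proposition~\ref{prp:corner->stable iso} to obtain the groupoid isomorphism $G\times\Rr\cong G|_K\times\Rr$, and then compose with the canonical stabilisation identifications, noting that each map in the chain preserves diagonals. The paper's proof is terser but uses exactly these ingredients; your added remarks about unit spaces, $^*$-preservation, and the non-Hausdorff setting only make explicit what the paper leaves implicit.
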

\begin{proof}
The canonical isomorphisms $C^*(G \times \Rr)\cong C^*(G)\otimes\Kk$ and $C^*(G|_K \times
\Rr)\cong C^*(G|_K)\otimes\Kk$ carry $C_0(G^{(0)}\times\NN)$ to $C_0(G^{(0)})\otimes\Cc$
and $C_0(K\times\NN)$ to $C_0(K)\otimes\Cc$. Similarly, the canonical $^*$-ring
isomorphisms $A_R(G \times \Rr)\cong A_R(G)\otimes M_\infty (R)$ and $A_R(G|_K \times
\Rr)\cong A_R(G|_K)\otimes M_\infty (R)$ carry $A_R(G^{(0)}\times\NN)$ to
$A_R(G^{(0)})\otimes D_\infty (R)$ and $A_R(K\times\NN)$ to $A_R(K)\otimes D_\infty (R)$.
Hence both statements follow from Proposition~\ref{prp:corner->stable iso}.
\end{proof}

From Lemma~\ref{lem:corner->stable iso}, Theorem~\ref{thm:equiv equiv} and
Theorem~\ref{thm:main} we obtain a version of \cite[Theorem~5.4]{Matui} for graph
$C^*$-algebras and Leavitt path algebras. For a ring $A$, we denote by $M(A)$ the
\emph{multiplier ring} of $A$ (see for example \cite{AP}).

\begin{cor}\label{cor:matui}
Let $E$ and $F$ be directed graphs, and let $R$ be a commutative integral domain with~1.
The following are equivalent:
\begin{enumerate}
\item\label{it:matuiKe} $G_E$ and $G_F$ are Kakutani equivalent;
\item\label{it:C*corners} there exist projections $p_E\in M(D(E))$ and $p_F\in
    M(D(F))$ and an isomorphism $\phi:p_EC^*(E)p_E\to p_FC^*(F)p_F$ such that $p_E$
    is full in $C^*(E)$, $p_F$ is full in $C^*(F)$, and $\phi(p_ED(E))=p_FD(F)$;
\item\label{it:LPAcorners} there exist projections $p_E\in M(D_R(E))$ and $p_F\in
    M(D_R(F))$ and a $^*$-ring isomorphism $\eta:p_EL_R(E)p_E\to p_FL_R(F)p_F$ such
    that $p_E$ is full in $L_R(E)$, $p_F$ is full in $L_R(F)$, and
    $\eta(p_ED_R(E))=p_FD_R(F)$.
\end{enumerate}
\end{cor}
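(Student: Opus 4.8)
The plan is to translate the corner conditions~(\ref{it:C*corners}) and~(\ref{it:LPAcorners}) into statements about restrictions of the graph groupoids, using the canonical diagonal-preserving identifications $C^*(E)\cong C^*(G_E)$ and $L_R(E)\cong A_R(G_E)$ (which carry $D(E)$ to $C_0(\go_E)$ and $D_R(E)$ to $A_R(\go_E)$). First I would set up a dictionary between projections and clopen sets. Since $C_0(\go_E)$ contains an approximate unit (resp.\ local units) for $C^*(G_E)$ (resp.\ $A_R(G_E)$), there are inclusions $M(D(E))\cong C_b(\go_E)\hookrightarrow M(C^*(E))$ and $M(D_R(E))\cong M(A_R(\go_E))\hookrightarrow M(L_R(E))$, and in each case the projections of the left-hand multiplier algebra are exactly the indicator functions $1_X$ of clopen sets $X\subseteq\go_E$. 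I would then record two facts: (a)~the corner $1_X C^*(G_E) 1_X$ is $C^*(G_E|_X)$, with $1_X C_0(\go_E) 1_X = C_0(X)$ the diagonal, and likewise $1_X A_R(G_E) 1_X = A_R(G_E|_X)$ with diagonal $A_R(X)$; and (b)~the projection $1_X$ is full in $C^*(E)$ (resp.\ $L_R(E)$) exactly when $X$ is $G_E$-full. Fact~(a) is immediate from $1_X f 1_X$ being the restriction of $f$ to $G_E|_X$. For~(b) one observes that the ideal generated by $1_X$ coincides with the ideal generated by $C_0(X)$ (resp.\ $A_R(X)$), which, via the correspondence between ideals and open invariant subsets of the unit space, is proper precisely when the saturation $r(G_E X)=[X]_{G_E}$ is a proper subset of $\go_E$.

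For (\ref{it:matuiKe})${\implies}$(\ref{it:C*corners}) and (\ref{it:matuiKe})${\implies}$(\ref{it:LPAcorners}), suppose $G_E$ and $G_F$ are Kakutani equivalent, so there are a $G_E$-full clopen $X\subseteq\go_E$ and a $G_F$-full clopen $Y\subseteq\ho$ with $G_E|_X\cong G_F|_Y$. Put $p_E:=1_X$ and $p_F:=1_Y$, which by the dictionary are full projections in $M(D(E))$ and $M(D(F))$ (resp.\ $M(D_R(E))$ and $M(D_R(F))$). The groupoid isomorphism $G_E|_X\cong G_F|_Y$ induces a diagonal-preserving isomorphism $C^*(G_E|_X)\cong C^*(G_F|_Y)$; composing with the corner identifications of fact~(a) gives $\phi\colon p_E C^*(E) p_E\to p_F C^*(F) p_F$ with $\phi(p_E D(E))=p_F D(F)$, which is~(\ref{it:C*corners}). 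The same groupoid isomorphism induces a diagonal-preserving $^*$-ring isomorphism $A_R(G_E|_X)\cong A_R(G_F|_Y)$, and the identical argument yields~(\ref{it:LPAcorners}).

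For the converse directions I would deliberately avoid reconstructing a groupoid isomorphism $G_E|_X\cong G_F|_Y$ from the corner isomorphism: the restricted groupoids need not be effective, so the Weyl-groupoid reconstruction results are unavailable. Instead I would stabilise. Given~(\ref{it:C*corners}), the dictionary presents $p_E,p_F$ as $1_X,1_Y$ for full clopen $X,Y$, and $\phi$ as a diagonal-preserving isomorphism $C^*(G_E|_X)\cong C^*(G_F|_Y)$. Applying Lemma~\ref{lem:corner->stable iso}(\ref{it:C*stable}) with $K=X$ and with $K=Y$ supplies diagonal-preserving isomorphisms $C^*(G_E)\otimes\Kk\cong C^*(G_E|_X)\otimes\Kk$ and $C^*(G_F|_Y)\otimes\Kk\cong C^*(G_F)\otimes\Kk$; combining these with the canonical identifications $C^*(E)\cong C^*(G_E)$, $C^*(F)\cong C^*(G_F)$ and with $\phi\otimes\mathrm{id}_{\Kk}$ yields a diagonal-preserving isomorphism $C^*(E)\otimes\Kk\cong C^*(F)\otimes\Kk$. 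This is condition~(\ref{it:CE sdcong CF}) of Theorem~\ref{thm:main}, so $G_E\times\Rr\cong G_F\times\Rr$ by~(\ref{it:E sge F}) of that theorem, and Theorem~\ref{thm:equiv equiv} then gives Kakutani equivalence of $G_E$ and $G_F$, i.e.~(\ref{it:matuiKe}). The implication (\ref{it:LPAcorners})${\implies}$(\ref{it:matuiKe}) is identical, using Lemma~\ref{lem:corner->stable iso}(\ref{it:SteinbergStable}) and conditions~(\ref{it:LE sdcong LF}) and~(\ref{it:E sge F}) of Theorem~\ref{thm:main}.

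The main obstacle I anticipate is fact~(b) of the dictionary: matching fullness of the multiplier projection $1_X$ with $G_E$-fullness of $X$ relies on the correspondence between (closed, resp.\ two-sided) ideals of the groupoid algebra and open invariant subsets of $\go_E$, which must be invoked carefully and in both the $C^*$- and Steinberg-algebra settings (it is clean here because the graph groupoids $G_E$ are Hausdorff and amenable with $\sigma$-compact unit spaces). The conceptually decisive step, by contrast, is the choice in the previous paragraph to stabilise and appeal to Theorem~\ref{thm:main} and Theorem~\ref{thm:equiv equiv} rather than to reconstruct a groupoid isomorphism directly; this is what upgrades the one-sided implications into genuine equivalences, with no hypothesis such as condition~(L) on $E$ and $F$.
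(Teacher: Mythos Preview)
Your proposal is correct and follows essentially the same route as the paper's proof: translate the multiplier projections $p_E,p_F$ into indicator functions of full clopen sets $X,Y$, use the groupoid isomorphism for the forward direction, and for the converse stabilise via Lemma~\ref{lem:corner->stable iso} to obtain a diagonal-preserving stable isomorphism, then invoke Theorem~\ref{thm:main} and Theorem~\ref{thm:equiv equiv}. Your write-up is in fact more explicit than the paper's about the dictionary (facts~(a) and~(b)) and about how the two applications of Lemma~\ref{lem:corner->stable iso} combine with $\phi\otimes\mathrm{id}$, but the strategy is identical.
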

\begin{proof}
We prove (\ref{it:matuiKe})${\iff}$(\ref{it:C*corners}); the proof of
(\ref{it:matuiKe})${\iff}$(\ref{it:LPAcorners}) is similar.

First suppose~(\ref{it:matuiKe}); say $X\subseteq G_E^{(0)}$ is a $G_E$-full clopen
subset and $Y\subseteq G_F^{(0)}$ is a $G_F$-full clopen subset such that $(G_E)|_X\cong
(G_F)|_Y$. Then the characteristic function of $X$ corresponds to a projection $p_E\in
M(D(E))$ which is full in $C^*(E)$ and such that $C^*((G_E)|_X)\cong p_EC^*(E)p_E$ by an
isomorphism that maps $C_0(X)$ onto $p_ED(E)$. Similarly, the characteristic function of
$Y$ corresponds to a projection $p_F\in M(D(F))$ which is full in $C^*(F)$ and such that
$C^*((G_F)|_Y)\cong p_FC^*(F)p_F$ by an isomorphism that maps $C_0(Y)$ onto $p_FD(F)$.
The isomorphism $(G_E)|_X\cong (G_F)|_Y$ gives an isomorphism $C^*((G_E)|_X)\cong
C^*((G_F)|_Y)$ that maps $C_0(X)$ onto $C_0(Y)$, which yields~(\ref{it:C*corners}).

Now suppose (\ref{it:C*corners}). The projection $p_E\in M(D(E))$ corresponds to a
$G_E$-full clopen subset $X$ of $G_E^{(0)}$ such that there is an isomorphism
$C^*((G_E)|_X)\cong p_EC^*(E)p_E$ that maps $C_0(X)$ onto $p_ED(E)$, and the projection
$p_F\in M(D(F))$ corresponds to a $G_F$-full clopen subset $Y$ of $G_F^{(0)}$ such that
there is an isomorphism $C^*((G_F)|_Y)\cong p_FC^*(F)p_F$ that maps $C_0(Y)$ onto
$p_FD(F)$. Lemma~\ref{lem:corner->stable iso}(\ref{it:C*stable}) gives a
diagonal-preserving isomorphism $C^*(E) \otimes \Kk \cong C^*(F) \otimes \Kk$. So
Theorem~\ref{thm:main} implies that $G_E \times \Rr$ and $G_F \times \Rr$ are groupoid
equivalent, and hence Theorem~\ref{thm:equiv equiv} implies that they are Kakutani
equivalent.
\end{proof}

Theorem~\ref{thm:equiv equiv} implies that the equivalent conditions
(\ref{it:matuiKe})--(\ref{it:LPAcorners}) of Corollary~\ref{cor:matui} are also
equivalent to the equivalent conditions (\ref{it:CE sdcong CF})--(\ref{it:SE ge SF}) of
Theorem~\ref{thm:main}.

\begin{cor}\label{cor:Z-algs}
If $E$ and $F$ are directed graphs, then $L_\ZZ(E) \otimes M_\infty(\ZZ) \cong L_\ZZ(F)
\otimes M_\infty(\ZZ)$ as $^*$-rings if and only if there is a diagonal-preserving
isomorphism $C^*(E) \otimes \Kk \cong C^*(F) \otimes \Kk$.
\end{cor}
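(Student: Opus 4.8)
The plan is to reduce both implications to Theorem~\ref{thm:main} applied with $R = \ZZ$, which is permitted since $\ZZ$ is a commutative integral domain with~1. The only ingredient not already internal to Theorem~\ref{thm:main} is the fact that over $\ZZ$ an \emph{arbitrary} $^*$-ring isomorphism of Leavitt path algebras is automatically diagonal preserving; this is exactly what Carlsen's work \cite{Carlsen} supplies, and it is the crux of the corollary.

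The ``if'' direction is immediate. Suppose there is a diagonal-preserving isomorphism $C^*(E) \otimes \Kk \cong C^*(F) \otimes \Kk$, which is condition~(\ref{it:CE sdcong CF}) of Theorem~\ref{thm:main}. The equivalence of~(\ref{it:CE sdcong CF}) and~(\ref{it:LE sdcong LF}) there, taken with $R = \ZZ$, yields a diagonal-preserving $^*$-ring isomorphism $L_\ZZ(E) \otimes M_\infty(\ZZ) \cong L_\ZZ(F) \otimes M_\infty(\ZZ)$, which is in particular a $^*$-ring isomorphism.

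For the ``only if'' direction I would begin with a $^*$-ring isomorphism $L_\ZZ(E) \otimes M_\infty(\ZZ) \cong L_\ZZ(F) \otimes M_\infty(\ZZ)$ and first transport it to the stabilised graphs. Lemma~\ref{lem:stabgraph} gives $G_E \times \Rr \cong G_{SE}$; combining this with the diagonal-preserving canonical isomorphism $L_\ZZ(E) \cong A_\ZZ(G_E)$ and the identification $A_\ZZ(G_E \times \Rr) \cong A_\ZZ(G_E) \otimes M_\infty(\ZZ)$ that carries $A_\ZZ(\go_E \times \NN)$ to $A_\ZZ(\go_E) \otimes D_\infty(\ZZ)$, I obtain a diagonal-preserving $^*$-ring isomorphism $L_\ZZ(E) \otimes M_\infty(\ZZ) \cong L_\ZZ(SE)$, and similarly for $F$. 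Composing the three isomorphisms produces a $^*$-ring isomorphism $L_\ZZ(SE) \cong L_\ZZ(SF)$.

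The hard part is then upgrading this to a \emph{diagonal-preserving} isomorphism, and this is precisely where Carlsen's theorem \cite{Carlsen} is needed: since $SE$ and $SF$ are directed graphs, any $^*$-ring isomorphism $L_\ZZ(SE) \cong L_\ZZ(SF)$ maps the diagonal subalgebra onto the diagonal subalgebra. This delivers condition~(\ref{it:LSE dcong LSF}) of Theorem~\ref{thm:main}, and feeding it back through Theorem~\ref{thm:main} (again with $R = \ZZ$) returns condition~(\ref{it:CE sdcong CF}), namely a diagonal-preserving isomorphism $C^*(E) \otimes \Kk \cong C^*(F) \otimes \Kk$, as required. Thus the entire content beyond Theorem~\ref{thm:main} and Lemma~\ref{lem:stabgraph} is the automatic diagonal preservation over $\ZZ$.
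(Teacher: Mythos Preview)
Your proof is correct and follows essentially the same route as the paper: transport the given $^*$-isomorphism to $L_\ZZ(SE)\cong L_\ZZ(SF)$ via Lemma~\ref{lem:stabgraph}, invoke \cite[Corollary~6]{Carlsen} to make it diagonal preserving, and then feed this back through Theorem~\ref{thm:main}; the converse direction is the straightforward implication (\ref{it:CE sdcong CF})${\implies}$(\ref{it:LE sdcong LF}) in both versions. The only cosmetic difference is that the paper lands on condition~(\ref{it:LE sdcong LF}) rather than~(\ref{it:LSE dcong LSF}) before citing Theorem~\ref{thm:main}, which is immaterial since those conditions are equivalent.
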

\begin{proof}
First suppose that $L_\ZZ(E) \otimes M_\infty(\ZZ) \cong L_\ZZ(F) \otimes M_\infty(\ZZ)$
as $^*$-rings. Then $L_\ZZ(SE) \cong L_\ZZ(SF)$ as $^*$-rings as well. By
\cite[Corollary~6]{Carlsen}, this $^*$-isomorphism is diagonal preserving, so
(2)${\implies}$(1) of Theorem~\ref{thm:main} gives a diagonal-preserving isomorphism
$C^*(E) \otimes \Kk \cong C^*(F) \otimes \Kk$. The reverse implication follows from
(1)${\implies}$(2) of Theorem~\ref{thm:main}.
\end{proof}

An important question about Leavitt path algebras is whether the complex Leavitt path
algebras of the graphs
\[
\begin{tikzpicture}
    \node[anchor=east] at (-0.3,0.25) {$E_2 = {}$};
    \node[circle, fill=black, inner sep=1pt] (v) at (0,0) {};
    \draw[-stealth] (v) .. controls +(1,1) and +(-1,1) .. (v);
    \draw[-stealth] (v) .. controls +(0.75, 0.75) and +(-0.75,0.75) .. (v);
    \node at (2.4,0.3) {and};
    \node[anchor=east] at (5.7,0.25) {$E_{2-} = {}$};
    \node[circle, fill=black, inner sep=1pt] (w1) at (6,0) {};
    \node[circle, fill=black, inner sep=1pt] (w2) at (7.5,0) {};
    \node[circle, fill=black, inner sep=1pt] (w3) at (9,0) {};
    \draw[-stealth] (w1) .. controls +(1,1) and +(-1,1) .. (w1);
    \draw[-stealth] (w1) .. controls +(0.75,0.75) and +(-0.75,0.75) .. (w1);
    \draw[-stealth] (w2) .. controls +(1,1) and +(-1,1) .. (w2);
    \draw[-stealth] (w3) .. controls +(1,1) and +(-1,1) .. (w3);
    \draw[-stealth, out=30, in=150] (w1) to (w2);
    \draw[-stealth, out=30, in=150] (w2) to (w3);
    \draw[-stealth, out=210, in=330] (w3) to (w2);
    \draw[-stealth, out=210, in=330] (w2) to (w1);
\end{tikzpicture}
\]
are isomorphic. This was recently answered in the negative for Leavitt path algebras over
$\ZZ$ as $^*$-rings \cite{JohansenSorensen}. We extend this to the question of stable
$^*$-isomorphism.

\begin{cor}\label{cor:graph over Z}
Let $E$ and $F$ be strongly connected finite graphs such that $L_\ZZ(E) \otimes
M_\infty(\ZZ)$ and $L_\ZZ(F) \otimes M_\infty(\ZZ)$ are $^*$-isomorphic. Then $\det(1 -
A_E^t) = \det(1 - A_F^t)$. In particular, $L_\ZZ(E_2) \otimes M_\infty(\ZZ)$ and
$L_\ZZ(E_{2-}) \otimes M_\infty(\ZZ)$ are not $^*$-isomorphic.
\end{cor}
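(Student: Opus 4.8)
The plan is to reduce the statement to a question about flow equivalence of shifts of finite type and then apply Franks' classification. Suppose that $L_\ZZ(E)\otimes M_\infty(\ZZ)\cong L_\ZZ(F)\otimes M_\infty(\ZZ)$ as $^*$-rings. Because $E$ and $F$ are strongly connected and finite, every vertex both emits and receives edges, so $\partial E=E^\infty$ and $\partial F=F^\infty$ are the irreducible one-sided shifts of finite type with irreducible transition matrices $A_E$ and $A_F$, and $C^*(E)$, $C^*(F)$ are the corresponding Cuntz--Krieger algebras. By Corollary~\ref{cor:Z-algs}, the hypothesis yields a diagonal-preserving isomorphism $C^*(E)\otimes\Kk\cong C^*(F)\otimes\Kk$.

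First I would run this through Theorem~\ref{thm:main} and Theorem~\ref{thm:equiv equiv}: the diagonal-preserving stable isomorphism gives $G_E\times\Rr\cong G_F\times\Rr$, so $G_E$ and $G_F$ are groupoid equivalent, and hence Kakutani equivalent. The next step, which uses the framework of Matui \cite{Matui} and Matsumoto--Matui \cite{MatsumotoMatui}, is to recognise groupoid (Kakutani) equivalence of the one-sided SFT groupoids as flow equivalence of the shifts $E^\infty$ and $F^\infty$; this is the stabilised, diagonal-preserving counterpart of Matsumoto--Matui's identification of groupoid isomorphism with continuous orbit equivalence. Given flow equivalence, I would invoke Franks' theorem: flow-equivalent irreducible shifts of finite type have isomorphic Bowen--Franks groups $\operatorname{coker}(1-A_E^t)\cong\operatorname{coker}(1-A_F^t)$ and equal signs $\operatorname{sgn}\det(1-A_E^t)=\operatorname{sgn}\det(1-A_F^t)$. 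A finite Bowen--Franks group has order $|\det(1-A^t)|$, while an infinite one forces $\det(1-A^t)=0$; so the isomorphism of Bowen--Franks groups fixes $|\det(1-A_E^t)|=|\det(1-A_F^t)|$, and combined with the equality of signs this gives $\det(1-A_E^t)=\det(1-A_F^t)$.

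For the two distinguished graphs one then computes directly. We have $A_{E_2}=(2)$, so $\det(1-A_{E_2}^t)=-1$; and $A_{E_{2-}}$ is the symmetric $3\times 3$ matrix with rows $(2,1,0)$, $(1,1,1)$, $(0,1,1)$, so $1-A_{E_{2-}}^t$ has rows $(-1,-1,0)$, $(-1,0,-1)$, $(0,-1,0)$ and determinant $+1$. Since $-1\neq 1$, the displayed invariant differs, so $L_\ZZ(E_2)\otimes M_\infty(\ZZ)$ and $L_\ZZ(E_{2-})\otimes M_\infty(\ZZ)$ cannot be $^*$-isomorphic.

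The hard part will be the middle step, namely genuinely extracting the \emph{sign} of the determinant. Plain stable isomorphism of the Cuntz--Krieger algebras only records $K_0=\operatorname{coker}(1-A^t)$, which is trivial for both $E_2$ and $E_{2-}$, so $C^*$-theory alone cannot separate them; the sign of $\det(1-A^t)$ is extra data carried by the diagonal, equivalently by the groupoid. The delicate point is therefore to be certain that groupoid (Kakutani) equivalence of the one-sided SFT groupoids forces honest flow equivalence of the shifts, rather than merely stable $C^*$-isomorphism, so that Franks' sign invariant is available. I would verify the precise hypotheses of the Matui/Matsumoto--Matui correspondence---irreducibility of $A_E$ and $A_F$, and the treatment of the degenerate single-cycle case where $\det(1-A^t)=0$---since these are exactly what guarantee that the sign survives.
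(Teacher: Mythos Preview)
Your proposal is correct and follows essentially the same route as the paper. Both arguments first invoke Corollary~\ref{cor:Z-algs} to obtain a diagonal-preserving isomorphism $C^*(E)\otimes\Kk\cong C^*(F)\otimes\Kk$, and then pass through the Matsumoto--Matui machinery to extract $\det(1-A_E^t)=\det(1-A_F^t)$; the paper compresses your chain ``groupoid equivalence $\Rightarrow$ Kakutani equivalence $\Rightarrow$ flow equivalence $\Rightarrow$ Franks'' into a direct citation of \cite[Theorem~4.1]{Matsumoto} combined with \cite[Theorem~3.6 and Corollary~3.8]{MatsumotoMatui}, and handles the degenerate single-cycle case (which you flag as a hypothesis to check) by the one-line observation that permutation matrices give $\det(1-A^t)=0$ on both sides.
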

\begin{proof}
Corollary~\ref{cor:Z-algs} gives a diagonal-preserving isomorphism $C^*(E) \otimes \Kk
\cong C^*(F) \otimes \Kk$. If $E$ and $F$ have cycles with exits, then as discussed in
the proof of \cite[Corollary~3.8]{MatsumotoMatui}, the proof of
\cite[Theorem~4.1]{Matsumoto} combined with \cite[Theorem~3.6]{MatsumotoMatui} gives
$\det(1 - A_E^t) = \det(1 - A_F^t)$.  If $E$ and $F$ have cycles without exists, then
$A_E$ and $A_F$ are permutation matrices, so $\det(1 - A_E^t) = \det(1 - A_F^t) = 0$. To
prove the final statement, one checks that $\det(1 - A_{E_2}^t) = -1$ and $\det(1 -
A_{E_{2-}}^t) = 1$.
\end{proof}

\begin{rmk}
It is natural to ask whether Corollary~\ref{cor:graph over Z} can be used to decide
whether $L_\ZZ(E_2)$ and $L_\ZZ(E_{2-})$ are Morita equivalent. Theorem~5 and part~2 of
the remarks following Corollary~7 in \cite{AAM} show that rings with enough idempotents
are stably isomorphic if and only if they are Morita equivalent. But this is a result
about ring isomorphisms, whereas Corollaries \ref{cor:Z-algs}~and~\ref{cor:graph over Z}
are about $^*$-ring isomorphisms (and the $^*$-preserving hypothesis is crucial to the
argument of \cite[Corollary~6]{Carlsen}, upon which our results hinge). So the question
remains open whether $L_\ZZ(E_2)$ and $L_\ZZ(E_{2-})$ are Morita equivalent. There is a
notion of Morita $^*$-equivalence for rings \cite{Ara:ART1999}. Though we were unable to
locate a reference, it seems likely that an analogue of \cite[Theorem~5]{AAM} holds for
stable $^*$-isomorphism and Morita $^*$-equivalence. If so, then such a result could be
combined with Corollary~\ref{cor:graph over Z} to prove that $L_\ZZ(E_2)$ and
$L_\ZZ(E_{2-})$ are not Morita $^*$-equivalent.
\end{rmk}

Theorem~\ref{thm:main} has implications for the stable isomorphisms associated to
S{\o}rensen's move equivalences of graphs \cite{Sorensen}. \emph{Move equivalence} for
graphs with finitely many vertices is the equivalence relation generated by four
operations: deleting a regular source; collapsing a regular vertex; in-splitting at a
regular vertex; and outsplitting. By \cite[Theorem~4.3]{Sorensen}, if $C^*(E)$ and
$C^*(F)$ are simple and $E$ and $F$ each contain at least one infinite emitter, then
$C^*(E) \otimes \Kk \cong C^*(F) \otimes \Kk$ if and only if $E$ and $F$ are move
equivalent.

\begin{cor}
Let $E$ and $F$ be directed graphs with finitely many vertices. Suppose that $E$ and $F$
are move equivalent. Then $G_E \times \Rr \cong G_F \times \Rr$, and there are
diagonal-preserving isomorphisms $C^*(E) \otimes \Kk \cong C^*(F) \otimes \Kk$ and
$L_R(E) \otimes M_\infty(R) \cong L_R(F) \otimes M_\infty(R)$ for every commutative ring
$R$ with 1.
\end{cor}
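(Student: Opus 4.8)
The plan is to reduce to the case where $F$ is obtained from $E$ by a single one of the four generating moves, and to treat each move separately. Since groupoid isomorphism is an equivalence relation, the relation ``$G_E\times\Rr\cong G_F\times\Rr$'' on graphs is itself an equivalence relation, so it suffices to verify $G_E\times\Rr\cong G_F\times\Rr$ whenever $F$ arises from $E$ by a single move. Once this is known, the algebraic conclusions follow uniformly for every commutative unital ring $R$: the canonical diagonal-preserving isomorphisms $C^*(E)\otimes\Kk\cong C^*(G_E\times\Rr)$ and $L_R(E)\otimes M_\infty(R)\cong A_R(G_E\times\Rr)$ (and likewise for $F$), composed with the diagonal-preserving isomorphisms $C^*(G_E\times\Rr)\cong C^*(G_F\times\Rr)$ and $A_R(G_E\times\Rr)\cong A_R(G_F\times\Rr)$ induced by $G_E\times\Rr\cong G_F\times\Rr$, yield the desired diagonal-preserving isomorphisms. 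Note that routing the argument through the groupoid isomorphism gives the Leavitt path algebra statement for \emph{all} commutative unital $R$, not merely for integral domains as in Theorem~\ref{thm:main}.

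Two of the four moves are immediate from the machinery already in hand. Collapsing a regular vertex is an instance of the Crisp--Gow collapsing procedure, so Lemma~\ref{lem:collapse} gives $G_E\times\Rr\cong G_F\times\Rr$ directly. For deleting a regular source $v$, I would take $K=\partial(E\setminus v)=\{x\in\partial E:s(x)\neq v\}$. Since $v$ is regular it is neither a sink nor an infinite emitter, so the cylinder $Z(v)$ consists only of paths of length at least one, and hence $K=\partial E\setminus Z(v)$ is a clopen subset of $G_E^{(0)}$. It is $G_E$-full: $v$ receives no edges, so any $x\in Z(v)$ begins with a single edge and $\sigma(x)\in K$ lies in the same $G_E$-orbit. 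Because $v$ is a source, a boundary path of $E$ avoids $v$ precisely when it does not start at $v$, and removing $v$ leaves the sink/infinite-emitter status of every other vertex unchanged; hence $G_E|_K$ is canonically identified with $G_{E\setminus v}$. Proposition~\ref{prp:corner->stable iso} then gives $G_E\times\Rr\cong G_{E\setminus v}\times\Rr$.

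The remaining cases, in-splitting and out-splitting at a regular vertex, are where I expect the main difficulty. Each such move is a recoding of the boundary-path dynamics, and the cleanest route is to produce an explicit homeomorphism $h:\partial E\to\partial F$ together with the relevant continuous cocycle data and to check that it induces an isomorphism $G_E\cong G_F$ of graph groupoids carrying $G_E^{(0)}$ onto $G_F^{(0)}$; then $G_E\times\Rr\cong G_F\times\Rr$ is immediate. The key point making this feasible is that $G_E$ records only the net shift $m-n$, so it is insensitive to the finite lags that splitting introduces, and one genuine (eventual) conjugacy of boundary-path systems suffices. Verifying that these relabelings are well-defined homeomorphisms compatible with the finite-path cylinder topology and with the shift cocycle is the technical heart of this case; as an alternative one could invoke the known diagonal-preserving isomorphisms of graph $C^*$-algebras associated to in/out-splitting together with the Weyl-groupoid reconstruction of \cite{BCW}, but that reconstruction is cleanest when cycles have exits, so the direct construction of the groupoid isomorphism is the more robust option. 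With all four moves handled, the reduction in the first paragraph completes the proof.
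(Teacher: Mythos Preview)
Your overall plan---reduce to a single move, prove $G_E\times\Rr\cong G_F\times\Rr$ for each generator, then read off the algebraic statements from the groupoid isomorphism---is exactly the paper's strategy, and your observation that going through the groupoid isomorphism directly yields the Leavitt statement for all commutative unital $R$ is a nice touch. Your treatment of (R) matches the paper, and your direct argument for (S) via the full clopen $K=\partial E\setminus Z(v)$ is correct.

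Where you diverge is in how much work remains. The paper dispatches three of the four moves at once: by \cite[pp.~2070--2071]{ClarkSims:JPAA15}, moves (S), (R) \emph{and} (I) are all instances of the Crisp--Gow collapsing procedure, so Lemma~\ref{lem:collapse} handles in-splitting just as easily as it handles collapsing a regular vertex. You have flagged in-splitting as one of the hard cases and sketched a programme for building an explicit groupoid isomorphism, but this is unnecessary. For out-splitting (O), the paper simply invokes \cite[Theorem~6.1 and Corollary~6.2]{BCW}, which already give $G_E\cong G_F$ unconditionally; your concern that the \cite{BCW} machinery needs every cycle to have an exit applies to their Weyl-groupoid reconstruction (Theorem~5.1), not to their out-splitting result. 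So the ``main difficulty'' you anticipate is already in the literature, and the paper's proof is correspondingly short. Your proposal is not wrong, but it leaves (I) and (O) as unexecuted sketches where the paper closes them with two citations.
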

\begin{proof}
S{\o}rensen's moves (S), (R) and (I) are all examples of Crisp and Gow's collapsing
procedure (see page 2070--2071 of \cite{ClarkSims:JPAA15}), so if $F$ is obtained from $E$ by
applying any of these moves, then Lemma~\ref{lem:collapse} shows that $G_E \times \Rr
\cong G_F \times \Rr$. By \cite[Theorem~6.1 and Corollary~6.2]{BCW}, if $F$ is obtained
from $E$ by applying move~(O), then $G_E \cong G_F$, so certainly $G_E \times \Rr \cong
G_F \times \Rr$. Induction establishes that if $E$ and $F$ are move equivalent then $G_E
\times \Rr \cong G_F \times \Rr$. The remaining statements follow from
Theorem~\ref{thm:main}.
\end{proof}

\end{document}